\documentclass[12pt]{amsart}
\usepackage{amsmath}
\usepackage{amsfonts}
\usepackage{amssymb}
\usepackage{mathtools}
\usepackage{amsthm}
\usepackage{eucal}
\usepackage{url}
\numberwithin{equation}{section}

\newtheorem{Theorem}{Theorem}[section]

\newtheorem{Lemma}[Theorem]{Lemma}

\newtheorem{Proposition}[Theorem]{Proposition}
\newtheorem{definition}[Theorem]{Definition}
\theoremstyle{remark}

\theoremstyle{remark}
\newtheorem*{remark}{Remark}
\newcommand{\codim}{\mathrm{codim}}
\theoremstyle{definition}
\newtheorem{Example}[Theorem]{Example}
\usepackage{mathrsfs}
\usepackage{eucal}
\usepackage{graphicx}
\renewcommand{\tilde}{\widetilde}

\newcommand{\cc}{\mathbb{C}}
\newcommand{\pp}{\mathbb{P}}
\renewcommand{\gg}{\mathbb{G}}

\newcommand{\E}{\mathcal{E}}

\newcommand{\G}{\mathscr{G}}
\newcommand{\cF}{\mathcal{F}}

\renewcommand{\L}{\mathcal{L}}

\newcommand{\N}{\mathcal{N}}

\renewcommand{\O}{\mathcal{O}}
\newcommand{\op}{\O_{\pp^1}}

\DeclareMathOperator{\spec}{Spec}
\usepackage{multicol,lipsum}
\newcommand\undermat[2]{%
  \makebox[0pt][l]{$\smash{\underbrace{\phantom{%
    \begin{matrix}#2\end{matrix}}}_{\text{$#1$}}}$}#2}
\usepackage[utf8]{inputenc}
\usepackage{tikz}
\usepackage{tikz-cd}
\usetikzlibrary{matrix,arrows,decorations.pathmorphing}
\usepackage[margin=1.4in]{geometry}
\title{Normal bundles of lines on hypersurfaces}
\author{Hannah K. Larson}
\address{Department of Mathematics, Harvard University, One Oxford Street, Cambridge MA 02138}
\email{hannahlarson@college.harvard.edu}

\begin{document}

\maketitle

\begin{abstract}
Let $X \subset \pp^n$ be a smooth hypersurface. Given a sequence of integers $\vec{a} = (a_1, \ldots, a_{n-2})$ with $a_1 \leq \cdots \leq a_{n-2}$, let $F_{\vec{a}}(X)$ be the parameter space of lines $L$ on $X$ such that $N_{L/X} \cong \O(a_1) \oplus \cdots \oplus \O(a_{n-2})$. The loci $F_{\vec{a}}(X)$ form a stratification of the Fano scheme of lines on $X$. We show that for general hypersurfaces, the $F_{\vec{a}}(X)$ have the expected dimension and, in this case, compute the class of $\overline{F_{\vec{a}}(X)}$ in the Chow ring of the Grassmannian of lines in $\pp^n$. For certain splitting types $\vec{a}$, we also provide non-trivial upper bounds on the dimension of $F_{\vec{a}}(X)$ that hold for all smooth $X$.
\end{abstract}

\section{Introduction}
Let $X \subset \pp^n$ be a smooth hypersurface of degree $d$ over the complex numbers. This paper is concerned with the geometry of the \textit{Fano scheme} parameterizing lines on $X$, defined by
\[F(X) := \{L \in \gg(1, n): L \subset X\},\]
where $\gg(1, n)$ is the Grassmannian parameterizing lines in $\pp^n$. Let $N = {n+d \choose d} - 1$ and let $U \subset \pp^N$ be the open subset parameterizing smooth hypersurfaces of degree $d$ in $\pp^n$. We define the \textit{universal Fano scheme} to be
\[\Sigma := \{(L, X) \in \gg(1, n) \times U : L \subset X\}.\]
The condition that a hypersurface contain a fixed line is $d+1$ linear conditions on the parameter space $\pp^N$, so looking at the projection $\Sigma \rightarrow \gg(1, n)$, one readily sees that $\Sigma$ is smooth and irreducible of dimension
\[\dim \Sigma = N-d - 1 + \dim \gg(1, n) = N + 2n - d - 3.\]

This dimension count gives rise to an expected dimension of $2n - d - 3$ for the Fano scheme of lines on hypersurface of degree $d$ in $\pp^n$. It is a well-known result that this expected dimension is indeed achieved for general hypersurfaces. A prominent conjecture of Debarre-de Jong states that the expected dimension should be achieved for all smooth hypersurfaces of degree $d \leq n$. The Debarre-de Jong Conjecture has been proved for $d \leq 8$ by Beheshti \cite{B} and for $d \ll n$ by Harris et al. \cite{H}.

Here, we study the normal bundles $N_{L/X}$ of lines $L$ in $X$, which govern the local geometry of $F(X)$ at $L$. For each $L \subset X \subset \pp^n$, there is a short exact sequence of normal bundles
\[0 \rightarrow N_{L/X} \rightarrow N_{L/\pp^n} \rightarrow N_{X/\pp^n}|_L \rightarrow 0,\]
in which the middle term is $\O(1)^{n-1}$ and the rightmost term is $\O(d)$. Since every vector bundle on $\pp^1$ splits as a direct sum of line bundles, it follows that 
\[N_{L/X} \cong \O(a_1) \oplus \cdots \oplus \O(a_{n-2})\]
for integers $a_1 \leq  \cdots \leq a_{n-2} \leq 1$ with $a_1 + \ldots + a_{n-2} = n - d - 1$.
To study the behavior of the normal bundle, for each sequence of integers $\vec{a} =  (a_1, \ldots, a_{n-2})$ satisfying the above conditions, we define
\[F_{\vec{a}}(X) := \{L \in F(X): N_{L/X} \cong \O(a_1) \oplus \cdots \oplus \O(a_{n-2})\}\]
and its universal counterpart
\[\Sigma_{\vec{a}} := \{(L, X) \in \Sigma : N_{L/X} \cong \O(a_1) \oplus \cdots \oplus \O(a_{n-2})\}.\]
For convenience of notation, we will abbreviate $\O(a_1) \oplus \cdots \oplus \O(a_{n-2})$ by $\O(\vec{a})$. 

The loci $\Sigma_{\vec{a}}$ can be realized as the loci where the members of a family of vector bundles on $\pp^1$ acquire certain splitting types. Let
\[\Phi := \{(p, L, X) \in \pp^n \times \gg(1, n) \times U : p \in L \subset X\}\]
be the universal line over $\Sigma$ and let $\pi: \Phi \rightarrow \Sigma$ be the projection map. In addition, let
\[\Psi := \{(p, L, X) \in \pp^n \times \gg(1, n) \times U : p \in X \text{ and } L \subset X\}\]
be the universal hypersurface over $\Sigma$. Then, the vector bundle $\mathcal{N}:= N_{\Phi/\Psi}$ has the property that for any $(L, X) \in \Sigma$, the restriction $\N|_{\pi^{-1}(L, X)}$ is $N_{L/X}$. It follows that the $\Sigma_{\vec{a}}$ form a stratification of $\Sigma$ with
\[\overline{\Sigma_{\vec{a}}} = \bigcup_{\vec{a}' \leq \vec{a}} \Sigma_{\vec{a}}\]
where the partial ordering $\leq$ is defined by
\[\vec{a}' \leq \vec{a} \qquad \Longleftrightarrow \qquad a_1' + \ldots + a_k' \leq a_1 + \ldots + a_k \quad \text{for all $k$}.\]
See for example Section 14.4.1 of \cite{EH}. There is a unique maximal element with respect to this partial ordering, which is determined by the condition $|a_i - a_j| \leq 1$ for all $i$ and $j$. We call this the \textit{balanced} splitting type and all others \textit{unbalanced}.

In this scenario of a family of vector bundles on $\pp^1$, deformation theory gives rise to an expected codimension for the loci $\Sigma_{\vec{a}}$.
Suppose $\mathcal{E}$ is any family of vector bundles on $\pp^1$ with base $B$, and let $\pi: \pp^1 \times B \rightarrow B$ be the projection map. For each point $b \in B$ there is an analytic neighborhood $B'$ of $b$ and a map from $B'$ to the deformation space of the vector bundle $\mathcal{E}|_{\pi^{-1}(b)}$ such that $\E|_{\pi^{-1}(B')}$ is equal to the pullback of the versal family on the deformation space.
 The codimension of the locus of points $b \in B$ where $\E|_{\pi^{-1}(b)} \cong \O(\vec{a})$ therefore has codimension at most the dimension of the deformation space of $\O(\vec{a})$.
We call this quantity the expected codimension for the locus where members acquire splitting type $\vec{a}$ and denote it by
\[u(\vec{a}) := h^1(\mathcal{E}nd(\O(\vec{a}))) = \sum_{i < j} \max\{a_j - a_i - 1, 0\}.\]

Our main results are the following.

\begin{Theorem} \label{1}
$\Sigma_{\vec{a}}$ is smooth and irreducible of codimension $u(\vec{a})$ in $\Sigma$.
\end{Theorem}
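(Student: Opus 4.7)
The plan is to use the projective bundle structure $\pi_1 \colon \Sigma \to \gg(1, n)$ to reduce the problem to a statement about a family of vector bundles on $\pp^1$, then extract the conclusion via a Kodaira-Spencer calculation together with an explicit parameterization by subbundles.

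First, I would use the $\mathrm{PGL}_{n+1}$-equivariance of $\pi_1$ and the homogeneity of $\gg(1,n)$ to reduce to the fiber of $\pi_1$ over a fixed line $L = V(x_2, \ldots, x_n)$. Any hypersurface containing $L$ is cut out by $f = \sum_{i=2}^n x_i g_i$ with $\deg g_i = d-1$, and the connecting map of
\[0 \to N_{L/X} \to \O_L(1)^{n-1} \xrightarrow{\phi} \O_L(d) \to 0\]
is $\phi = (g_i|_L)_i \in W := H^0(\O_{\pp^1}(d-1))^{n-1}$. Thus $f \mapsto \phi$ defines a surjective linear map $\mu \colon V \to W$ (with $V$ the space of such $f$) whose fibers are linear. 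The splitting-type stratification on $V$ pulls back along $\mu$ from the stratification of the open subset $W^\circ \subset W$ where $\phi$ is surjective, so it suffices to establish smoothness, irreducibility, and codimension for $W^\circ_{\vec{a}} \subset W^\circ$.

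Next, I would show the Kodaira-Spencer map of the universal kernel $\mathcal{K}$ on $\pp^1 \times W^\circ$ is surjective at each $\phi \in W^\circ$. With $\mathcal{K}_\phi \cong \O(\vec{a})$ and kernel inclusion $\iota \colon \mathcal{K} \hookrightarrow \O(1)^{n-1}$, this map factors as
\[T_\phi W^\circ = H^0(\mathcal{H}om(\O(1)^{n-1}, \O(d))) \xrightarrow{(-)\circ\iota} H^0(\mathcal{H}om(\mathcal{K}, \O(d))) \xrightarrow{\partial} H^1(\mathcal{E}nd(\O(\vec{a}))),\]
where $\partial$ is the connecting homomorphism of
\[0 \to \mathcal{E}nd(\mathcal{K}) \to \mathcal{H}om(\mathcal{K}, \O(1)^{n-1}) \to \mathcal{H}om(\mathcal{K}, \O(d)) \to 0\]
obtained from $\mathcal{H}om(\mathcal{K}, -)$ applied to the kernel sequence. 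The restriction $(-)\circ\iota$ is surjective because the obstruction lies in $H^1(\mathcal{E}nd(\O(d))) = H^1(\O_{\pp^1}) = 0$, and $\partial$ is surjective because $\mathcal{H}om(\mathcal{K}, \O(1)^{n-1}) = \bigoplus_i \O(1-a_i)^{n-1}$ has vanishing $H^1$ (using $a_i \leq 1$). Hence KS is surjective, so the classifying map from $W^\circ$ to the moduli stack of rank-$(n-2)$ bundles of degree $n-1-d$ on $\pp^1$ is smooth. Since the point $[\O(\vec{a})]$ has codimension $u(\vec{a}) = h^1(\mathcal{E}nd(\O(\vec{a})))$ in this stack, its preimage $W^\circ_{\vec{a}}$ is smooth of codimension $u(\vec{a})$.

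For irreducibility, I would describe $W^\circ_{\vec{a}}$ via kernel inclusions: any $\phi \in W^\circ_{\vec{a}}$ is determined by the inclusion $\iota \colon \O(\vec{a}) \hookrightarrow \O(1)^{n-1}$ of its kernel (whose cokernel is forced to be $\O(d)$ by rank and degree) together with an isomorphism $\mathrm{coker}(\iota) \cong \O(d)$ (a $\gg_m$-torsor of choices). The space of such $\iota$ is a nonempty open subset of the irreducible vector space $\mathrm{Hom}(\O(\vec{a}), \O(1)^{n-1})$, and modding out by the connected algebraic group $\mathrm{Aut}(\O(\vec{a}))$ yields an irreducible quotient; $W^\circ_{\vec{a}}$ is a $\gg_m$-bundle over this quotient, hence irreducible. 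The main obstacle I anticipate is the deformation-theoretic step: translating Kodaira-Spencer surjectivity into smoothness and exact codimension $u(\vec{a})$ for $W^\circ_{\vec{a}}$ requires careful invocation of the moduli stack of bundles on $\pp^1$ and a stack-dimension calculation comparing $h^0$ and $h^1$ of $\mathcal{E}nd(\O(\vec{a}))$, both of which are standard but must be set up with some care.
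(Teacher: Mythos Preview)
Your proposal is correct and is essentially a hybrid of the two proofs the paper gives. The paper's primary proof fixes $L$, builds the incidence correspondence $\Omega = \{(X,\varphi): L\subset X,\ \varphi\colon \O(\vec a)\xrightarrow{\sim} N_{L/X}\}$, and uses the two projections $\Omega\to \rho_{\vec a}^{-1}(L)$ (fibers $\mathrm{Aut}(\O(\vec a))$) and $\Omega\to \mathrm{Hom}(\O(\vec a),\O(1)^{n-1})$ (image the injective maps, fibers open in a linear system by Bertini) to read off smoothness, irreducibility, and codimension in one dimension count; the paper then gives an alternative deformation-theoretic argument for the codimension by explicitly constructing a slice in $\Sigma$ lifting a neighborhood of $0\in\mathrm{Def}(N_{L/X})$. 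Your irreducibility argument (parametrize $W^\circ_{\vec a}$ by injective $\iota\colon\O(\vec a)\hookrightarrow\O(1)^{n-1}$ modulo $\mathrm{Aut}(\O(\vec a))$, then a $\gg_m$-torsor) is exactly the $\Omega$-picture restricted to $W^\circ$, while your smoothness/codimension argument via surjectivity of Kodaira--Spencer is a cleaner, purely cohomological version of the paper's slice construction---the two vanishing statements $H^1(\O_{\pp^1})=0$ and $H^1(\bigoplus\O(1-a_i)^{n-1})=0$ are precisely what make the paper's slice exist.

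One small point you should not skip: you work on all of $V$ (equations of degree-$d$ hypersurfaces through $L$) and pull back from $W^\circ$, but $\Sigma$ sits over the open locus $U$ of \emph{smooth} hypersurfaces. To conclude that $\rho_{\vec a}^{-1}(L)$ is irreducible (not just an open, possibly empty or disconnected, subset of the irreducible $\mu^{-1}(W^\circ_{\vec a})$), you need that for every $\phi\in W^\circ$ the fiber $\mu^{-1}(\phi)$ meets the smooth locus. The paper handles this by noting that any $f$ with partials $\phi\in W^\circ$ is automatically smooth along $L$, so Bertini applies to the linear system $\mu^{-1}(\phi)$. This is routine but should be said.
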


\begin{remark}
Although the open strata $\Sigma_{\vec{a}}$ are smooth, their closures can and will be singular along the more unbalanced splitting types.
\end{remark}

\begin{Example}
For degree $7$ hypersurfaces in $\pp^5$, the following diagram indicates which strata lie in the closure of others and the codimension of each strata in $\Sigma$.
Here, the balanced splitting type is $(-1, -1, -1)$ at the far right.

\begin{center}
\hspace{-.25in}
\begin{tikzcd}[column sep =small] 
&&&&\overline{\Sigma_{(-3, 0, 0)}} \arrow[hookrightarrow]{rd}\\
&\overline{\Sigma_{(-5,1,1)}} \arrow[hookrightarrow]{r} & \overline{\Sigma_{(-4, 0, 1)}} \arrow[hookrightarrow]{r} & \overline{\Sigma_{(-3, -1, 1)}} \arrow[hookrightarrow]{ru}  \arrow[hookrightarrow]{rd} &&\overline{\Sigma_{(-2,-1,0)}} \arrow[hookrightarrow]{r} & \overline{\Sigma_{(-1, -1, -1)}}\\
&&&&\overline{\Sigma_{(-2, -2, 1)}} \arrow[hookrightarrow]{ru} \\[-12pt]
&10 & 7 & 5 & 4 & 1 & 0
\end{tikzcd}
\end{center}

\vspace{.1in}
\noindent
Note that the splitting types $(-3, 0, 0)$ and $(-2, -2, 1)$ cannot specialize to each other, showing that in general the splitting types are not totally ordered.
\end{Example}

\begin{Theorem} \label{2}
If $u(\vec{a}) > 2n - d - 3$ then $F_{\vec{a}}(X)$ is empty for general $X$. If $u(\vec{a}) \leq 2n - d - 3$, then $F_{\vec{a}}(X)$
has codimension $u(\vec{a})$ inside $F(X)$ for general $X$. In this case, the class of $\overline{F}_{\vec{a}}(X)$ in the Chow ring of $\gg(1, n)$ is computed by the formula in Proposition \ref{classp}.
\end{Theorem}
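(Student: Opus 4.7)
The plan is to deduce Theorem \ref{2} from Theorem \ref{1} by analyzing the projection $\pi_U : \Sigma_{\vec{a}} \to U$. By Theorem \ref{1}, $\Sigma_{\vec{a}}$ is irreducible of dimension
\[
\dim \Sigma_{\vec{a}} = N + 2n - d - 3 - u(\vec{a}),
\]
and the scheme-theoretic fiber of $\pi_U$ over $X \in U$ is exactly $F_{\vec{a}}(X)$. If $u(\vec{a}) > 2n - d - 3$ then $\dim \Sigma_{\vec{a}} < \dim U$, so $\pi_U$ cannot be dominant and $F_{\vec{a}}(X) = \emptyset$ for general $X$, which gives the vanishing half.

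In the case $u(\vec{a}) \leq 2n - d - 3$, the goal is to show that $\pi_U$ is dominant. Once this holds, irreducibility of $\Sigma_{\vec{a}}$ together with the fiber dimension theorem imply that the general fiber has dimension $\dim \Sigma_{\vec{a}} - N = 2n - d - 3 - u(\vec{a})$, which is codimension $u(\vec{a})$ inside $F(X)$ (of dimension $2n - d - 3$ for general $X$). The hard part will be proving dominance, since a priori $\Sigma_{\vec{a}}$ could lie entirely over a proper closed subvariety of $U$. My approach is to exhibit a single pair $(L_0, X_0) \in \Sigma_{\vec{a}}$ with $X_0$ smooth for which $\dim F_{\vec{a}}(X_0) \leq 2n - d - 3 - u(\vec{a})$; upper semicontinuity of fiber dimension together with the dimension formula for $\Sigma_{\vec{a}}$ then forces $\pi_U$ to be dominant with general fiber of the expected dimension. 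To construct such a pair I would fix a line $L_0$ and work inside the affine subspace $U_{L_0} \subset U$ of smooth hypersurfaces containing $L_0$, of dimension $N - d - 1$. Via the sequence $0 \to N_{L_0/X} \to \O(1)^{n-1} \to \O(d) \to 0$, the normal bundle $N_{L_0/X}$ appears as the kernel of a bundle map $\O(1)^{n-1} \to \O(d)$ on $\pp^1 \cong L_0$ that depends linearly on the equation of $X \in U_{L_0}$, so the locus of $X$ with $N_{L_0/X} \cong \O(\vec{a})$ is a linearly parametrized splitting locus. By Theorem \ref{1} and $PGL_{n+1}$-equivariance of the construction, this locus has codimension $u(\vec{a})$ in $U_{L_0}$, and a sufficiently general $X_0$ on it, together with a transversality analysis for the family of lines on $X_0$, provides the required example.

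Finally, once the codimension statement is in hand, the class $[\overline{F_{\vec{a}}(X)}] \in A^*(\gg(1, n))$ is obtained by multiplying the Fano class $[F(X)] = c_{\mathrm{top}}(\mathrm{Sym}^d \mathcal{S}^*)$ (where $\mathcal{S}$ is the tautological subbundle on $\gg(1, n)$) with the class of $\overline{F_{\vec{a}}(X)}$ inside $F(X)$, computed as the splitting locus of the rank-$(n-2)$ bundle $\N|_{F(X)}$. The universal Chern-class formula for splitting loci of families of bundles on $\pp^1$ then yields the expression recorded in Proposition \ref{classp}.
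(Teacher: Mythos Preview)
Your overall architecture matches the paper's: deduce emptiness from $\dim \Sigma_{\vec{a}} < \dim U$ via Theorem~\ref{1}, and for $u(\vec{a}) \le 2n-d-3$ reduce dominance of $\pi_U$ to exhibiting a single pair $(L_0,X_0)$ with $\dim F_{\vec{a}}(X_0) \le 2n-d-3-u(\vec{a})$, then invoke upper semicontinuity. The class computation via Porteous on $F(X)$ is also what the paper does in Proposition~\ref{classp}.

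The gap is in the construction of that pair. You fix $L_0$, note that the locus of $X$ in $U_{L_0}$ with $N_{L_0/X}\cong \O(\vec{a})$ has codimension $u(\vec{a})$ (this is indeed just Theorem~\ref{1} restricted to the fiber over $L_0$), and then say that ``a sufficiently general $X_0$ on it, together with a transversality analysis for the family of lines on $X_0$, provides the required example.'' But this last sentence is precisely the statement to be proved: you must control the \emph{other} lines on $X_0$, and nothing you have said so far bounds $\dim F_{\vec{a}}(X_0)$. Knowing that the fiber of $\Sigma_{\vec{a}}\to \gg(1,n)$ over $L_0$ has the right codimension gives no information about the fiber of $\Sigma_{\vec{a}}\to U$ over $X_0$. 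The paper closes this gap by an explicit tangent-space computation (Lemma~\ref{tlem}): it identifies $T_L F_{\vec{a}}(X)$ as the subspace of $H^0(N_{L/X})$ on which certain expressions $\sum_{i,j} c_{ik} H_{ij} v_j$ land in $\mathrm{span}\{f_2,\ldots,f_n\}$, and then (Theorem~\ref{tada}) writes down a specific inclusion $\O(-1)^b\oplus\O^\ell\oplus\O(1)^m\hookrightarrow \O(1)^{n-1}$ and specific second partials $H_{kj}$ for which those conditions are visibly $bm=u(\vec{a})$ independent linear constraints on $T_LF(X)$. (Along the way it uses the observation that $u(\vec{a})\le 2n-d-3$ forces $a_1\ge -1$, so only the types $(-1^b,0^\ell,1^m)$ are in play; this also explains why Porteous on the single map $\pi_*\phi(-1)$ suffices for the class formula.) Your proposal needs an argument of this kind in place of the phrase ``transversality analysis.''
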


\begin{Theorem} \label{3}
If $d \geq 3$, then $\dim F_{(-1,1,\ldots,1)}(X) \leq n-3$ for all smooth hypersurfaces $X \subset \pp^n$ of degree $d$. If $d \geq 4$, then $\dim F_{(-a,-b,1,\ldots,1)}(X) \leq n-1$ for all smooth hypersurfaces $X \subset \pp^n$ of degree $d$.
\end{Theorem}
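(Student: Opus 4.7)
The plan is to bound $\dim F_{\vec{a}}(X)$ by analyzing the evaluation map from the incidence variety of pointed lines. Set $S = F_{\vec{a}}(X)$ and consider
\[I := \{(p, L) \in X \times S : p \in L\},\]
with projections $\pi_X : I \to X$ and $\pi_S : I \to S$; since $\pi_S$ is a $\pp^1$-bundle, $\dim I = \dim S + 1$.

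First I would bound the rank of $d\pi_X$ at a point $(p, L) \in I$. The image of $d\pi_X$ in $T_p X$ equals $T_p L + \mathrm{ev}_p(T_L S)$, where $\mathrm{ev}_p : T_L S \subseteq H^0(N_{L/X}) \to N_{L/X}|_p$ is evaluation at $p$. Since $H^0(\op(c)) = 0$ for $c < 0$, the image of $\mathrm{ev}_p$ lies in the direct summand of $N_{L/X}|_p$ coming from the non-negative components of the splitting. For $\vec{a} = (-1, 1, \ldots, 1)$ this gives $\mathrm{rank}(d\pi_X) \leq 1 + (n-3) = n-2$, so $Z := \pi_X(I)$ satisfies $\dim Z \leq n-2$. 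For $\vec{a} = (-a, -b, 1, \ldots, 1)$ with $a, b \geq 1$, the same argument gives $\dim Z \leq n-3$; when $b = 0$ one still gets $\dim Z \leq n-2$.

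Next I would control the fibers of $\pi_X$. The key observation is that
\[H^0(N_{L/X}(-p - q)) = 0\]
for distinct $p, q \in L$, since every summand of $N_{L/X}$ has degree at most $1$ and sections of $\op(1)$ vanishing at two points are identically zero. Equivalently, the two-point evaluation $H^0(N_{L/X}) \to N_{L/X}|_p \oplus N_{L/X}|_q$ is injective, so any first-order deformation of $L$ in $S$ fixing two points of $L$ vanishes. In Part 1, this should force $\pi_X$ to be generically finite onto $Z$: a positive-dimensional family of $\vec{a}$-lines through a common point $p$ would sweep out a surface $\Sigma \subseteq X$ ruled by such lines, and the distinguished $\op(-1) \subset N_{L/X}$ --- the unique minimum-degree summand --- would constrain $\Sigma$ in a way incompatible with smoothness of $X$. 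Combining yields $\dim S + 1 = \dim I = \dim Z \leq n-2$, i.e., $\dim F_{\vec{a}}(X) \leq n-3$. For Part 2, combining the rank bound on $d\pi_X$ with the tangent bound $h^0(N_{L/X}(-p)) \leq n-4$ on fibers, together with the generic bound $n - 1 - d$ on the dimension of lines through a point of $X$, should yield $\dim F_{\vec{a}}(X) \leq n-1$.

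The hard part will be the fiber analysis in Part 1: ruling out $1$-parameter families of $\vec{a}$-lines through a common point. The distinguished $\op(-1)$ subbundle of $N_{L/X}$ should be the essential tool, as it assigns to each $L \in F_{\vec{a}}(X)$ a canonical negative direction; in a hypothetical family through a fixed point, these directions must vary compatibly with the fixed splitting type, producing an overdetermined system that I expect to be incompatible with smoothness of $X$.
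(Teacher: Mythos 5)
Your first step is fine: the rank of $d\pi_X$ at $(p,L)$ is $1+\dim \mathrm{ev}_p(T_LS)$, and since sections of the negative summands vanish, $\dim Z\leq n-2$ for the completely unbalanced type. But the argument then has a genuine gap at exactly the point you flag as ``the hard part,'' and the tools you propose cannot close it. The vanishing $H^0(N_{L/X}(-p-q))=0$ holds for \emph{every} line on \emph{every} smooth hypersurface (all summands have degree $\leq 1$), so it carries no information about the unbalanced locus and only controls lines through \emph{two} fixed points; the fiber of $\pi_X$ over $p$ is the family of $S$-lines through \emph{one} point, whose tangent space is $T_LS\cap H^0(N_{L/X}(-p))$ and can have dimension up to $n-3$. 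Indeed, the paper's Fermat examples show fibers of $\pi_X$ of dimension exactly $n-3$ over cone vertices, so any finiteness claim must be generic on each dominating component and actually proved. The heuristic that the distinguished $\O(-1)$ ``produces an overdetermined system incompatible with smoothness'' is not an argument. For Part 2 the numbers do not even close: when $b=0$ one has $\dim Z\leq n-2$ and $h^0(N_{L/X}(-p))=n-3$, giving only the trivial $\dim I\leq 2n-5$; and the bound $n-1-d$ for lines through a point applies to a \emph{general} point of $X$, whereas the points of $Z$ are precisely not general.

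The paper's proof runs along entirely different lines, and the difference is instructive. The unbalanced splitting is converted into a statement about the Gauss map $\G\colon X\to\pp^{n*}$: since $N_{L/X}$ contains $n-3$ (resp.\ $n-4$) copies of $\O(1)$, the span of the restricted partials $f_2,\ldots,f_n$ is $2$-dimensional (resp.\ $3$-dimensional), so $\G|_L$ is a degree $d-1\geq 2$ map onto a line (resp.\ a degree $d-1\geq 3$ map onto a plane curve), hence fails to be injective (resp.\ fails to be an embedding). Because the Gauss map of a smooth hypersurface is finite and generically one-to-one, the incidence correspondence of triples $(p,q,L)$ with $\G(p)=\G(q)$, $p\neq q$ (together with, for Part 2, the locus where $d\G|_L$ drops rank) has dimension at most $n-2$ (resp.\ $n-1$), and fibering over $F'(X)$ or $G_{a,b}(X)$ gives the stated bounds. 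If you want to salvage your approach, you would need to supply a genuine-finiteness statement for $\pi_X$ over each maximal-dimensional component, which is essentially a new theorem; I would instead look for the global geometric consequence of the splitting type, which here is the degeneration of the Gauss map along $L$.
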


\begin{remark}
The questions in this paper could just as well be asked for higher degree rational curves. Recent work of Riedl and Yang \cite{RY} shows that when $n \geq d+2$, the locus of rational curves of degree $e$ on general hypersurfaces $X \subset \pp^n$ of degree $d$ has the ``expected codimension" $e(n-d+1)+n-4$. However, the results of Coskun and Riedl in \cite{CR} on normal bundles of rational curves in $\pp^n$ suggest that the normal bundles of higher degree rational curves on hypersurfaces may be less well behaved.
\end{remark}

This paper is organized as follows.
In the next section, we put a scheme structure on $\Sigma_{\vec{a}}$ and explain how to compute the class of $\overline{\Sigma_{\vec{a}}}$ for certain splitting types $\vec{a}$, assuming they have the expected codimension. We also provide explicit local equations for $\Sigma_{\vec{a}}$ and describe its functor of points. In Section 3, we prove Theorem \ref{1}. We then describe the tangent space to $F_{\vec{a}}(X)$ and prove Theorem \ref{2} in Section 4. In Section 5, we study singularities on $F_{\vec{a}}(X)$ when $X$ is a cubic threefold and give an important example of the scheme structure of $F_{\vec{a}}(X)$ for $X$ the Fermat quartic threefold.
In Section 6, we find the class of $\overline{F_{\vec{a}}(X)}$ and compute the number of lines with unbalanced normal bundle on a general quintic fourfold as an example. Finally, in Section 7, we prove Theorem \ref{3}.

\subsection*{Acknowledgements.}
First and foremost, I would like to thank Professor Joe Harris for all of his encouragement and advice, and meeting with me weekly to discuss ideas. I am also grateful to the 2017 Harvard Program for Research in Science and Engineering (PRISE) and the Herchel Smith Fellowship for their support last summer, when I began working on this project. Finally, thanks to James Hotchkiss for many helpful conversations about this topic and algebraic geometry in general.

\section{The scheme $\Sigma_{\vec{a}}$}
Here, we give $\Sigma_{\vec{a}}$ the structure of a scheme by realizing it as an intersection of loci where certain maps of vector bundles drop rank.
We work on the closure of the universal Fano scheme
\[\overline{\Sigma} := \{(L, X) \in \gg(1, n) \times \pp^N : L \subset X\},\]
and introduce incidence correspondences
\[\overline{\Phi} := \{(p, L, X) \in \pp^n \times \gg(1, n) \times \pp^N : p \in L \subset X\},\]
and
\[\overline{\Psi} := \{(p, L, X) \in \pp^n \times \gg(1, n) \times \pp^N : p \in X \text{ and } L \subset X\},\]
which are just the closures in $\pp^n \times \gg(1, n) \times \pp^N$ of the varieties defined earlier with the same letters.
Next, we set
 \[\N := N_{\overline{\Phi}/\overline{\Psi}} \qquad  \E := N_{\overline{\Phi}/ \pp^n \times \overline{\Sigma}} \qquad \text{and} \qquad \cF := N_{\overline{\Psi}/\pp^n \times \overline{\Sigma}}|_{\overline{\Phi}}\]
 so that we have a short exact sequence of sheaves on $\overline{\Phi}$,
 \begin{equation} \label{seq}
 \begin{tikzcd}
 &0 \arrow{r} &\N \arrow{r}  &\E \arrow{r}{\phi} &\cF \arrow{r}  &0.
 \end{tikzcd}
 \end{equation}
 Note that $\E$ and $\cF$ are vector bundles, but since $\overline{\Psi}$ is singular, $\N$ is not. Let $\mathcal{Q}$ be the universal quotient bundle on $\gg(1, n)$.
Labeling the relevant projections
\begin{equation}
\begin{tikzcd}[column sep = small]
&&\overline{\Phi} \arrow{dl}[swap]{\alpha} \arrow{dr}{\pi}\\
&\pp^n & & \overline{\Sigma} \arrow{dl}[swap]{\rho} \arrow{dr}{\beta}\\
&&\gg(1,n) & & \pp^N,
\end{tikzcd}
\end{equation}
one readily identifies the vector bundles $\E$ and $\cF$ as
\[
\E = (\rho \circ \pi)^* \mathcal{Q} \otimes \alpha^* \O_{\pp^n}(1) \qquad 
\text{and} \qquad
 \cF = \alpha^* \O_{\pp^n}(d) \otimes (\beta \circ \pi)^* \O_{\pp^N}(1).\]
 
For each $i \geq -1$, tensoring \eqref{seq} with $\alpha^*\O_{\pp^n}(i)$ gives rise to a new short exact sequence
\begin{equation} \label{seq2}
 \begin{tikzcd}
 &0 \arrow{r} &\N(i) \arrow{r}  &\E(i) \arrow{r}{\phi(i)} &\cF(i) \arrow{r}  &0.
 \end{tikzcd}
\end{equation}
Applying pushforward by $\pi$ to the above sequence \eqref{seq2}, we obtain an exact sequence of sheaves on $\overline{\Sigma}$,
\begin{equation} \label{seq3}
 \begin{tikzcd}
 &0 \arrow{r} &\pi_*\N(i) \arrow{r}  &\pi_*\E(i) \arrow{r}{\pi_*\phi(i)} &\pi_*\cF(i) \arrow{r} & R^1\pi_* \N(i) \arrow{r} & 0.
 \end{tikzcd}
\end{equation}
Since
\begin{equation} \label{rankE}
h^0(\E(i)|_{\pi^{-1}(L, X)}) = h^0(\O_L(1+i)^{n-1}) = (i+2)(n-1)
\end{equation}
and
\begin{equation} \label{rankF}
h^0(\cF(i)|_{\pi^{-1}(L,X)}) = h^0(\O_L(d+i)) = d + i + 1
\end{equation}
are constant as $(L, X)$ vary over $\overline{\Sigma}$, the theorem on cohomology and base change tells us that $\pi_*\E(i)$ and $\pi_*\cF(i)$ are vector bundles on $\overline{\Sigma}$. Indeed, using the push-pull formula for vector bundles and denoting by $\mathcal{S}^*$ the dual of the universal subbundle on $\gg(1, n)$, we identify
\[\pi_*\E(i) = \rho^* \mathcal{Q} \otimes \rho^* \mathrm{Sym}^{1+i} \mathcal{S}^* \qquad \text{and} \qquad \pi_*\cF(i) = \rho^* \mathrm{Sym}^{d+i} \mathcal{S}^* \otimes \beta^* \O_{\pp^N}(1).\]

Given any map of vector bundles $\psi: E \rightarrow F$ on some base $B$, let $M_{k}(\psi)$ denote the subscheme of $B$ where $\psi$ has rank at most $k$, defined by the $(k+1) \times (k+1)$ minors of $\psi$. Furthermore, let $M'_{k}(\psi)$ be the locally closed subscheme $M_{k}(\psi) \backslash M_{k-1}(\psi)$ where $\psi$ has rank exactly $k$.
For each $k \geq 0$ and $i \geq -1$, looking at \eqref{seq2}, we see that the locus $M_k'(\pi_*\phi(i))$ where $\pi_*\phi(i)$ has rank exactly $k$ is
\[M_k'(\pi_*\phi(i)) = \{(L, X) \in \overline{\Sigma}: h^0(\N(i)_{(L,X)}) = (i+2)(n-1) - k\}.\]
Meanwhile, given a splitting type $\vec{a} = (a_1, \ldots, a_{n-2})$, the locus $\Sigma_{\vec{a}}$ can be described as
\[\Sigma_{\vec{a}} = \{(L, X) \in \Sigma: h^0(\N(i)_{(L,X)}) = t_i(\vec{a}) \text{ for all $i \geq -1$}\},\]
where
\[t_i(\vec{a}) := h^0(\O(\vec{a}) \otimes \O(i)) = \sum_{j=1}^{n-2} \max\{0, a_j + i + 1\}. \]
Thus, we can give $\Sigma_{\vec{a}}$ the structure of a scheme by taking an appropriate intersection of schemes $M_{k}'(\pi_*\phi(i))$. For convenience, let
\[r_i(\vec{a}) := (i+2)(n-1) - t_i(\vec{a}).\]
\begin{definition}
We define the scheme $\Sigma_{\vec{a}}$ as
\begin{equation}\label{defSigma}
\Sigma_{\vec{a}} = \Sigma \cap \bigcap_{i=-1}^{d-2} M_{r_i(\vec{a})}'(\pi_*\phi(i)).
\end{equation}
The loci $F_{\vec{a}}(X)$ then inherit a scheme structure as the fibers of $\Sigma_{\vec{a}}$ under projection to the $\pp^N$ parameterizing hypersurfaces of our given degree and dimension.
\end{definition}

\begin{Example}[Cubic hypersurfaces]
There are only two splitting types for the normal bundle of a line on a cubic hypersurface: either
\[N_{L/X} = \O(-1) \oplus \O(1)^{n-3} \qquad \text{or} \qquad N_{L/X} = \O \oplus \O \oplus \O(1)^{n-4}.\]
Let $\vec{a} = (-1, 1, \ldots, 1)$ denote the unbalanced splitting type and $\vec{b} = (0, 0, 1, \ldots, 1)$ the balanced splitting type. We have
\begin{align*}
t_i(\vec{a}) &= \begin{cases} n-3 & \text{if $i=-1$} \\ i+(i+2)(n-3) & \text{if $i \geq 0$} \end{cases} \quad &\text{and} \qquad
 t_i(\vec{b}) &= i+(i+2)(n-3) \quad \forall i,
\intertext{and hence}
r_i(\vec{a}) &= \begin{cases} 2 & \text{if $i=-1$} \\ i+4 & \text{if $i \geq 0$} \end{cases} \quad &\text{and} \qquad
 r_i(\vec{b}) &= i+4 \quad \forall i.
 \end{align*}
Since $r_i(\vec{a}) = r_i(\vec{b})$ for all $i \geq 0$, the locus $M_{r_i(\vec{a})}'(\pi_*\phi(i))$ is all of $\Sigma$ for these $i$. Therefore, the only non-trivial term in the intersection \eqref{defSigma} comes from when $i = -1$, giving
\[\Sigma_{\vec{a}} = \Sigma \cap M_2(\pi_*\phi(-1)).\]
\end{Example}

\subsection{The class of certain $\overline{\Sigma_{\vec{a}}}$} \label{clsec}

As seen in the previous example, sometimes $\overline{\Sigma_{\vec{a}}}$ is equal to $M_k(\pi_*\phi(i))$ for some $k$ and $i$. In this case, assuming $\overline{\Sigma_{\vec{a}}}$ has the correct codimension, Porteous' formula will give rise to a formula for the class of $\overline{\Sigma_{\vec{a}}}$ in the Chow ring of $\overline{\Sigma}$.

To set up this formula we need some notation. Given an element $\gamma$ of the Chow ring of a projective scheme $B$, let $\gamma_i$ be the component of $\gamma$ in degree $i$. Then, given any $\gamma$ and natural numbers $e$ and $f$, let 
\begin{equation} \label{ddelta}
\Delta^e_f(\gamma) = \left(\begin{matrix}\gamma_f & \gamma_{f+1}  & \cdots & \gamma_{e+f-1} \\[8pt]
\gamma_{f-1} & \gamma_f & \cdots & \gamma_{e+f-2} \\[10pt]
\vdots & \vdots & \ddots & \vdots \\[10pt]
\gamma_{f-e+1} & \gamma_{f-e+2} & \cdots & \gamma_{f}\end{matrix}\right).
\end{equation}
In general, Porteous' formula says that if $\psi: E \rightarrow F$ is any map of vector bundles of ranks $e$ and $f$ on $B$ and the scheme $M_k(\psi)$ has codimension $(f-k)(e-k)$ in $B$, then the class of $M_k(\phi)$ in the Chow ring of $B$ is
\[[M_k(\phi)] = \Delta^{e-k}_{f-k}\left(\frac{c(F)}{c(E)}\right),\]
where $c(F)$ and $c(E)$ are the total Chern classes of $F$ and $E$.

\vspace{.1in}
The $\vec{a}$ for which we will calculate the class of $\overline{\Sigma_{\vec{a}}}$ have the form 
\begin{equation} \label{good}
\vec{a} = (s_1, \ldots, s_{n-2-m}, \underbrace{1, \ldots, 1}_m) \qquad \text{where} \qquad \sum_{i < j} \max\{0, s_j - s_i - 1\} = 0.
\end{equation}
The closure of such $\Sigma_{\vec{a}}$ in $\Sigma$ consists of pairs $(L, X)$ where $N_{L/X}$ has at least $m$ copies of $\O(1)$ in it, whose closure in $\overline{\Sigma}$ is exactly $M_{r_{-1}(\vec{a})}(\pi_*\phi(-1)).$
In addition, for such $\vec{a}$, we always have
\[u(\vec{a}) = mb =(\mathrm{rank}(\pi_*\E(-1)) - r_{-1}(\vec{a}))(\mathrm{rank}(\pi_*\cF(-1)) - r_{-1}(\vec{a}))\]
where $b = m-(n-d-1)$. Applying Porteous' formula, we immediately arrive at the following.

\begin{Proposition} \label{classes}
Suppose $\vec{a}$ has the form \eqref{good}, and set $b = m-(n-d-1)$. If $\overline{\Sigma_{\vec{a}}}$ has the expected codimension $u(\vec{a})$ in $\overline{\Sigma}$, then its class in the Chow ring of $\overline{\Sigma}$ is given by
\[\left[ \ \overline{\Sigma_{\vec{a}}} \ \right] =  \Delta^{m}_{b}\left(\frac{c(\pi_*\cF(-1))}{c(\pi_*\E(-1))}\right).\]
\end{Proposition}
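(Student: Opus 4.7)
The plan is to realize $\overline{\Sigma_{\vec{a}}}$ as a single degeneracy locus for the map $\pi_*\phi(-1)\colon \pi_*\E(-1)\to \pi_*\cF(-1)$ on $\overline{\Sigma}$, and then read the class off the version of Porteous' formula recalled above. The assertion that $\overline{\Sigma_{\vec{a}}}$ coincides with this degeneracy locus is already made in the paragraph containing \eqref{good}, and once it is established the proposition is an immediate substitution into Porteous.

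First, and this is the only step requiring real work, I would verify set-theoretically that $\overline{\Sigma_{\vec{a}}} = M_{r_{-1}(\vec{a})}(\pi_*\phi(-1))$. A point $(L,X)$ lies in $\overline{\Sigma_{\vec{a}}}$ iff its splitting type $\vec{a}\,'$ satisfies $\vec{a}\,'\leq \vec{a}$. Since the total $a_1+\cdots+a_{n-2}=n-d-1$ is fixed, the partial-sum condition is equivalent to the dual tail-sum condition; for $\vec{a}$ of the form \eqref{good} the tail sum $a_{n-1-m}+\cdots+a_{n-2}$ equals $m$, so $\vec{a}\,'\leq \vec{a}$ forces $a'_{n-1-m}+\cdots+a'_{n-2}\geq m$, and the bound $a'_j\leq 1$ then forces each of these $m$ top entries of $\vec{a}\,'$ to equal $1$. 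Equivalently, $h^0(N_{L/X}(-1))\geq m$, and by the pointwise interpretation of $\pi_*\N(-1)$ coming from \eqref{seq3} this is exactly the condition $\mathrm{rank}(\pi_*\phi(-1))|_{(L,X)}\leq (n-1)-m = r_{-1}(\vec{a})$.

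Next I would line up the numerics needed to apply Porteous. Setting $i=-1$ in \eqref{rankE} and \eqref{rankF} gives $e := \mathrm{rank}(\pi_*\E(-1)) = n-1$ and $f := \mathrm{rank}(\pi_*\cF(-1)) = d$. With $k = r_{-1}(\vec{a}) = n-1-m$ one has $e-k = m$ and $f-k = d-(n-1-m) = m-(n-d-1) = b$, so the expected codimension of the degeneracy locus is $(e-k)(f-k) = mb = u(\vec{a})$, matching the identity recorded just before the statement of the proposition.

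Finally, under the hypothesis that $\overline{\Sigma_{\vec{a}}}$ has codimension $u(\vec{a})$ in $\overline{\Sigma}$, the degeneracy locus $M_{n-1-m}(\pi_*\phi(-1))$ attains the expected Porteous codimension $(e-k)(f-k)$, so the formula recalled just above the proposition applies and yields
\[
\bigl[\,\overline{\Sigma_{\vec{a}}}\,\bigr] \;=\; \bigl[M_{n-1-m}(\pi_*\phi(-1))\bigr] \;=\; \Delta^{m}_{b}\!\left(\frac{c(\pi_*\cF(-1))}{c(\pi_*\E(-1))}\right).
\]
The only mild obstacle is the combinatorial translation in the first paragraph — converting the partial-order condition $\vec{a}\,'\leq\vec{a}$ for splitting types of shape \eqref{good} into a single rank-drop inequality on $\pi_*\phi(-1)$; after that, Porteous does all the work, and irreducibility of $\Sigma_{\vec{a}}$ from Theorem \ref{1} guarantees that the cycle class produced agrees with the fundamental class of the closed subvariety.
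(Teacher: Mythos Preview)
Your proposal is correct and follows exactly the paper's approach: the paper likewise identifies $\overline{\Sigma_{\vec{a}}}$ with $M_{r_{-1}(\vec{a})}(\pi_*\phi(-1))$ in the paragraph preceding the proposition and then simply says ``Applying Porteous' formula, we immediately arrive at the following,'' with the same numerics $e-k=m$, $f-k=b$. One small remark: your argument for the set-theoretic equality only spells out the inclusion $\overline{\Sigma_{\vec{a}}}\subseteq M_{r_{-1}(\vec{a})}(\pi_*\phi(-1))$; the converse (that having at least $m$ copies of $\O(1)$ forces $\vec{a}\,'\leq\vec{a}$) is where the balancedness condition on the $s_i$ in \eqref{good} is actually used, since it makes $\vec{a}$ the maximal splitting type among those with $\geq m$ ones.
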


\begin{remark}
Given that each $\Sigma_{\vec{a}}$ has the ``expected codimension", it is natural to wonder if a similar formula can be found for all $\vec{a}$. One obstacle to following the same approach as above is that for $i > -1$, the loci $M_{r_i(\vec{a})}(\pi_*\phi(i))$ pick up components of larger dimension. For example, when $n = 6$ and $d = 11$, so that the rank of the normal bundle is $4$ and its degree is $-6$, we have
\[M_{2}(\pi_*\phi(0)) = \overline{\Sigma_{(-3,-3,0,0)}} \cup \overline{\Sigma_{(-3,-2,-2,1)}}.\]
\end{remark}

To use Proposition \ref{classes} in practice, we need to describe the Chow ring of $\overline{\Sigma}$ and find the Chern classes of $\pi_*\E(i)$ and $\pi_*\cF(i)$. This is easily done since $\overline{\Sigma}$ is a projective bundle over $\gg(1, n)$ and the vector bundles $\pi_*\E(i)$ and $\pi_*\cF(i)$ are tensor products of vector bundles with known Chern classes, and hence their
 Chern classes are determined by the splitting principle.

\subsection{Explicit local equations} 

For each line $L _0 \in \gg(1, n)$, we can choose coordinates on $\pp^n$ so that $L _0 = V(x_2, \ldots, x_n)$ and affine coordinates $a_{ij}$ on $\gg(1, n)$ where
 \[\left(\begin{matrix} a_{20} & a_{30} & \cdots & a_{n0} \\ a_{21} & a_{31} & \cdots & a_{n1} \end{matrix}\right) \quad \longleftrightarrow \quad L: x_i = a_{i0} x_0 + a_{i1} x_1.\]
Over the product of this affine open with $\pp^N$, the map $\phi$ in \eqref{seq} is given by $n-1$ homogeneous polynomials of degree $d-1$,
\[\frac{\partial f}{\partial x_i}(x_0, x_1, a_{20}x_0 + a_{21}x_1, \ldots, a_{n0} x_0 + a_{n1} x_1) = C_{i0} x_0^{d-1} + \ldots + C_{id-1} x_1^{d-1} \]
where the $C_{ij}$ are bihomogeneous polynomials in the $a_{k\ell}$ and the coefficients of $f$, which are our coordinates on $\pp^N$.

Over this open subset, the vector bundle $\pi_* \E(i)$ is naturally identified with the trivial bundle with fiber $H^0(\O_{L_0}(1+i))^{n-1}$ and similarly $\pi_*\cF(i)$ is naturally identified with $H^0(\O_{L_0}(d+i)) \otimes \beta^*\O_{\pp^N}(1)$.
In terms of the standard basis of monomials on each copy of $H^0(\O(1+i))$ and $H^0(\O(d+i))$, the map $\pi_*\phi(i)$ is represented by the matrix
\[C(i) = \left(
\begin{matrix}
C_{2,0} & 0  & \cdots  & 0 &&\hdots\\
C_{2,1} & C_{2, 0} & \cdots & 0 &&\hdots\\
\vdots & C_{2,1}  & \ddots & \vdots &&\hdots\\
C_{2,d-1} & \vdots & \ddots & C_{2,0} &&\hdots\\
0 & C_{2, d-1} &  & C_{2,1} &&\hdots\\
\vdots & \vdots & \ddots & \vdots&&\hdots \\
0 & 0 & \cdots & C_{2,d-1}&&\hdots
\end{matrix} 
\begin{matrix}
&&C_{n,0} & 0  & \cdots  & 0\\
&&C_{n,1} & C_{n, 0} & \cdots & 0\\
&&\vdots & C_{n,1}  & \ddots & \vdots\\
&&C_{n,d-1} & \vdots & \ddots & C_{n,0} \\
&&0 & C_{n, d-1} &  & C_{n,1}\\
&&\vdots & \vdots & \ddots & \vdots \\
&& 0 & 0 & \cdots & C_{n,d-1}
\end{matrix}\right)
\]
The local equations for $M_{r_i(\vec{a})}(\pi_*\phi(i))$ are exactly the $(r_i(\vec{a})+1) \times (r_i(\vec{a})+1)$ minors of this matrix. The collection of all these minors as $i$ runs from $-1$ to $d-2$ are thus local equations for $\overline{\Sigma_{\vec{a}}}$.

\begin{Example}[$n=4,d=3$] \label{cubie4}
We have $\Sigma_{\vec{a}} = M_2(\pi_*\phi(-1))$, which is defined by the single equation
\[\det C(-1) = \det \left(\begin{matrix} C_{2,0} & C_{3,0} & C_{4,0} \\ C_{2,1} & C_{3,1} & C_{4,1} \\ C_{2,2} & C_{3,2} & C_{4,2}\end{matrix}\right).\]
Since this equation is non-zero, this provides a direct proof that $\Sigma_{\vec{a}} \subset \Sigma$ has codimension $1$, which is the expected codimension for this locus.
\end{Example}

\subsection{The functor of points}
As one might hope, $\Sigma_{\vec{a}}$ has a nice description in terms of its functor of points. 
Given any morphism $\eta: Z \rightarrow \Sigma$, one obtains a diagram
\begin{equation} \label{deformation}
\begin{tikzcd}
&\tilde{\eta}^*\N \arrow{r} \arrow{d} &\N \arrow{d} \\
&\eta^*\Phi \arrow{d}\arrow{r}{\tilde{\eta}} & \Phi \arrow{d} \\
&Z \arrow{r}{\eta} & \Sigma,
\end{tikzcd} 
\end{equation}
where $\tilde{\eta}^*\N$ is a family of vector bundles on $\pp^1$ with base $Z$. We say that such a family has \textit{constant local splitting type $\vec{a}$} if $Z$ can be covered by open sets $U$ over which $(\eta^*\Phi)_U \cong U \times \pp^1$ and
\[(\tilde{\eta}^*\N)_{U} \cong \alpha^*\O(\vec{a})\]
where $\alpha: (\eta^*\Phi)_U \cong U \times \pp^1 \rightarrow \pp^1$ is the projection map. With this notion, we have the following.

\begin{Lemma} Let $\N = N_{\Phi/\Psi}$ as before. Then
\[\mathrm{Mor}(Z, \Sigma_{\vec{a}}) = \{\eta \in \mathrm{Mor}(Z, \Sigma): \tilde{\eta}^*\N \text{ has constant local splitting type $\vec{a}$}\}.\]
\end{Lemma}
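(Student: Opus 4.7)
The plan is to prove the two containments separately, with both relying on cohomology and base change along the relative-$\pp^1$ morphism $\pi$ combined with the determinantal description of $\Sigma_{\vec{a}}$ in \eqref{defSigma}.

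For $(\Leftarrow)$, suppose $\tilde{\eta}^*\N$ has constant local splitting type $\vec{a}$. On an open $U \subset Z$ over which $\eta^*\Phi|_U \cong U \times \pp^1$ and $\tilde{\eta}^*\N|_U \cong \alpha^*\O(\vec{a})$, both $(\pi_U)_*\tilde{\eta}^*\N(i)$ and $R^1(\pi_U)_*\tilde{\eta}^*\N(i)$ are free of ranks $t_i(\vec{a})$ and $h^1(\O(\vec{a})(i))$ respectively. Combining flat base change with the vanishing $R^1\pi_*\E(i)=0$ for $i\geq -1$ (which holds because $\E$ is fiberwise $\O(1+i)^{n-1}$), the pullback of \eqref{seq3} identifies $\eta^*\pi_*\phi(i)$ with a map whose image is a subbundle of rank $r_i(\vec{a})$. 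Hence $\eta$ factors through each $M'_{r_i(\vec{a})}(\pi_*\phi(i))$ and therefore through $\Sigma_{\vec{a}}$.

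For $(\Rightarrow)$, suppose $\eta$ factors through $\Sigma_{\vec{a}}$. Scheme-theoretic factorization through $M'_{r_i(\vec{a})}(\pi_*\phi(i))$ means the $(r_i(\vec{a})+1)$-minors pull back to $0$ while the $r_i(\vec{a})$-minors do not all vanish at any point; a local row/column reduction using an invertible $r_i(\vec{a}) \times r_i(\vec{a})$ submatrix then shows that the image of $\eta^*\pi_*\phi(i)$ is a subbundle of rank exactly $r_i(\vec{a})$ for each $i \in \{-1, \ldots, d-2\}$. Applying $(\pi_Z)_*$ to the pullback of \eqref{seq2} and using $R^1(\pi_Z)_*\tilde{\eta}^*\E(i) = 0$, we identify $(\pi_Z)_*\tilde{\eta}^*\N(i)$ as a vector bundle of rank $t_i(\vec{a})$ and $R^1(\pi_Z)_*\tilde{\eta}^*\N(i)$ as a vector bundle of rank $h^1(\O(\vec{a})(i))$. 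By cohomology and base change, both commute with passage to fibers, so $h^0(\tilde{\eta}^*\N(i)|_z) = t_i(\vec{a})$ at every $z \in Z$. Because the $a_j$ lie in $\{2-d, \ldots, 1\}$ with prescribed sum, these values for $i=-1, \ldots, d-2$ uniquely determine the multiset $\{a_j\}$, so each fiber of $\tilde{\eta}^*\N$ has splitting type exactly $\vec{a}$.

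It remains to upgrade pointwise constancy of splitting type to a Zariski-local isomorphism with $\alpha^*\O(\vec{a})$. Restrict $Z$ to an affine open $U$ on which $\eta^*\Phi$ trivializes as $U \times \pp^1$, set $a := \max_j a_j$ with multiplicity $m$, and note that $-a \in \{-1, \ldots, d-2\}$, so $(\pi_U)_*\tilde{\eta}^*\N(-a)$ is locally free of rank $m$. After shrinking $U$ we trivialize this pushforward; its $m$ generating sections adjointly define a map $\alpha^*\O(a)^m \to \tilde{\eta}^*\N$ that is fiberwise an isomorphism onto the top $\O(a)^m$ summand, hence a subbundle inclusion with quotient $\mathcal{W}$ of fiberwise splitting type $(a_1, \ldots, a_{n-2-m})$. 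The extension class lies in $H^1(U \times \pp^1, \mathcal{W}^\vee(a)^m)$; fiberwise $\mathcal{W}^\vee(a)$ has only strictly positive summands, so $R^1\pi_*\mathcal{W}^\vee(a)^m = 0$ by base change, and the Leray spectral sequence reduces the obstruction to $H^1(U, \pi_*\mathcal{W}^\vee(a)^m)$, which vanishes because $U$ is affine. The sequence splits, and induction on rank applied to $\mathcal{W}$ completes the proof. The chief obstacle is precisely this final passage from pointwise to local triviality: the rank bookkeeping is formal, but one must carefully coordinate the trivialization of $\eta^*\Phi$, the trivialization of $(\pi_U)_*\tilde{\eta}^*\N(-a)$, and the passage to an affine base so that the $H^1$-vanishing that splits the extension is available.
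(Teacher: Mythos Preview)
Your proof is correct and complete. The two directions match the paper's in spirit but your $(\Rightarrow)$ argument takes a genuinely different route.

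For $(\Leftarrow)$ both you and the paper argue that the constant-splitting-type hypothesis forces the relevant minors to vanish; your version is slightly more structured (you verify the image of $\eta^*\pi_*\phi(i)$ is a rank-$r_i(\vec a)$ subbundle, which simultaneously handles the ``not in $M_{r_i(\vec a)-1}$'' part), but the content is the same.

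For $(\Rightarrow)$ the paper works directly on $\Sigma_{\vec a}$ and builds the isomorphism $\N_U \cong \alpha^*\O(\vec a)$ constructively: it chooses a local frame for $\ker\pi_*\phi(-1)_U$ to produce the $\O(1)^{m_1}$ summand, observes that multiplying these frame elements by $x_0,x_1$ lands in $\ker\pi_*\phi(0)_U$, completes to a frame there to extract the $\O^{m_0}$ summand, and continues down through the degrees. Your argument instead separates the problem into two steps: first, use the determinantal conditions and cohomology and base change to deduce \emph{pointwise} constancy of the splitting type; second, prove the general fact that a vector bundle on $U\times\pp^1$ (with $U$ affine) of constant fiberwise splitting type is globally $\alpha^*\O(\vec a)$, by peeling off the top summand and killing the extension class via $R^1\pi_* = 0$ and $H^1(U,-)=0$. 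The paper's approach is more hands-on and tied to the specific presentation via the maps $\pi_*\phi(i)$; yours is more modular and isolates a reusable lemma about families of bundles on $\pp^1$. One small point worth making explicit in your write-up: the inductive step applies to $\mathcal W$, for which you need $(\pi_U)_*\mathcal W(-a')$ locally free; this follows from Grauert's theorem once you know $\mathcal W$ has constant fiberwise splitting type, but you should say so, since the rank conditions you established were only for $\tilde\eta^*\N$ itself.
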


\begin{proof}
First we check that the restriction of $\N$ to $\Sigma_{\vec{a}}$ has constant local splitting type $\vec{a}$. For each $(L, X) \in \Sigma_{\vec{a}}$, let $U \subset \Sigma_{\vec{a}}$ be an open set containing $(L, X)$ over which $\E_U$ can be trivialized, i.e.
\[\E_U \cong H^0(\O_L(1)^{n-1}) \otimes \O_U.\]
By the theorem on cohomology and base change, $\pi_*\N(i)$ is a vector bundle on $\Sigma_{\vec{a}}$. After passing to a possibly smaller open set $U$, we can assume that
\[\pi_*\N(i)_U = \ker \pi_*\phi(i)_U: \pi_*\E(i)_U \rightarrow \pi_*\cF(i)_U\]
can be trivialized for all $i = -1, \ldots, d-2$.
An inclusion of $\alpha^*\O(a_k)$ into the vector bundle $\E_U \cong H^0(\O_L(1)^{n-1}) \otimes \O_U$ is specified by a collection of $n-1$ polynomials of degree $1-a_k$ in $x_0, x_1$ with coefficients rational functions on $U$ that do not simultaneously vanish, i.e. a non-zero section of 
\[H^0(\O_L(1-a_k)^{n-1}) \otimes \O_U = \pi_*\E(-a_k)_U.\]
 Such a section will define an inclusion of $\alpha^*\O(a_k)$ into $\N_{U}$ exactly when the section lies in the kernel of $\pi_*\phi(-a_k)_U$.

Now let $m_i$ be the number of $i$'s in the list $\vec{a}$. Suppose we have a local frame of the kernel of $\pi_*\phi(-1)_U$,
\[w^{(k)} = (w_2^{(k)}, \ldots, w_{n}^{(k)}) \in H^0(\O_L^{n-1}) \otimes \O_U = \O_U^{n-1} \qquad \qquad k=1, \ldots, m_1.\]
We observe that the elements 
\[(w_2^{(k)} x_0, \ldots, w_n^{(k)}x_0) \text{ and } (w_2^{(k)}x_1, \ldots, w_n^{(k)}x_1) \in H^0(\O(1))^{n-1} \otimes \O_U \]
are in the kernel of $\pi_*\phi_U$. Since $\ker \pi_*\phi_U = \pi_*\N_U$ has rank $2m_1 + m_0$, we can find another $m_0$ independent sections of $H^0(\O(1)^{n-1}) \otimes \O_U$ in the kernel of $\pi_*\phi_U$. These $m_0$ sections define an inclusion $\alpha^*\O^{m_0} \rightarrow \N$ which is independent from our previous inclusion of $\alpha^*\O(1)^{m_1}$.

Continuing in this way, the conditions on the rank of $\pi_*\phi(i)$ guarantee that we can find sections in its kernel that define inclusions of $\alpha^*\O(-i)^{m_{-i}}$ independent from those defined before for lower $i$. Hence, the restriction of $\N$ to $\Sigma_{\vec{a}}$ has constant local splitting type $\vec{a}$.

It remains to show that if $\eta: Z \rightarrow \Sigma$ is such that $\tilde{\eta}^*\N$ has constant local splitting type $\vec{a}$ then $\eta$ factors through $\Sigma_{\vec{a}}$. If the restriction of $\tilde{\eta}^*\N$ to some $U \subset Z$ is $\alpha^*\O(\vec{a})$, then the $(r_i(\vec{a})+1) \times (r_i(\vec{a})+1)$ minors of $\tilde{\eta}^*\phi(i)$ vanish. But this is given locally by $\eta^\#$ applied to the minors of $C(i)$, where $\eta^\#: \O_{\Sigma} \rightarrow \O_Z$ is the map of structure sheaves. Thus, $\eta^\#$ kills the ideal of $\Sigma_{\vec{a}}$, which is to say $\eta$ factors through the subscheme $\Sigma_{\vec{a}} \subset \Sigma$.
\end{proof}

\section{Proof of Theorem \ref{1}}
In this section, we prove Theorem \ref{1}, providing two proofs of the dimension statement. The first uses a standard dimension-counting argument with an incidence correspondence and also proves smoothness and irreducibility. The second uses deformation theory to obtain the dimension statement directly. Recall the statement of the theorem.
\begin{Theorem} \label{uni-ecd}
 $\Sigma_{\vec{a}}$ is smooth and irreducible of codimension $u(\vec{a})$ in $\Sigma$.
\end{Theorem}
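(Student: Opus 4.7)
The plan is to exploit the transitive $PGL_{n+1}$-action on $\gg(1,n)$ to reduce the theorem to a statement about splitting strata in a fixed space of morphisms of vector bundles on $\pp^1$, where standard cohomological tools apply. Fixing a line $L_0 = V(x_2, \ldots, x_n) \subset \pp^n$, all fibers of $\Sigma_{\vec{a}} \to \gg(1,n)$ are mutually isomorphic, so it suffices to prove the analogous statement for the fiber $\Sigma^{L_0}_{\vec{a}}$ over $L_0$ inside the space $\Sigma^{L_0}$ of smooth hypersurfaces containing $L_0$. I would then introduce the affine space $V := \mathrm{Hom}(\O_{L_0}(1)^{n-1}, \O_{L_0}(d))$ and the linear map $\mu: \Sigma^{L_0} \to V$ sending $f \mapsto \bigl(\partial_{x_i} f|_{L_0}\bigr)_{i=2}^n$, which records the morphism $\phi$ from the normal bundle sequence. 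This map is a surjection (preimages are realized by $f = \sum_{i \geq 2} x_i \tilde g_i$ for arbitrary lifts $\tilde g_i$ of the data), and $\mu^{-1}(V_{\vec{a}}) = \Sigma^{L_0}_{\vec{a}}$, where $V_{\vec{a}} \subset V$ denotes the locus of morphisms whose kernel has splitting type $\vec{a}$. Since $\mu$ is an affine bundle, all claimed properties transfer, and the theorem is reduced to showing that $V_{\vec{a}}$ is smooth and irreducible of codimension $u(\vec{a})$ in $V$.

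For codimension and smoothness of $V_{\vec{a}}$, I would verify that the Kodaira--Spencer map $T_\phi V \to \mathrm{Ext}^1(K, K)$ is surjective at each $\phi \in V_{\vec{a}}$, where $K := \ker \phi \cong \O(\vec{a})$. Starting from the sequence $0 \to K \to \O(1)^{n-1} \to \O(d) \to 0$, applying $\mathrm{Hom}(-, \O(d))$ gives surjectivity of $V = \mathrm{Hom}(\O(1)^{n-1}, \O(d)) \twoheadrightarrow \mathrm{Hom}(K, \O(d))$ via $\mathrm{Ext}^1(\O(d), \O(d)) = H^1(\O_{\pp^1}) = 0$, and applying $\mathrm{Hom}(K, -)$ gives surjectivity of $\mathrm{Hom}(K, \O(d)) \twoheadrightarrow \mathrm{Ext}^1(K, K)$ via $\mathrm{Ext}^1(K, \O(1)^{n-1}) = \bigoplus_j H^1(\O(1-a_j))^{n-1} = 0$, which holds because every $a_j \leq 1$. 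Composing, the Kodaira--Spencer map is surjective, so the family of kernels over $V$ is versal at $\phi$; standard deformation theory then gives $\codim_V V_{\vec{a}} = \dim \mathrm{Ext}^1(K, K) = u(\vec{a})$ and smoothness of $V_{\vec{a}}$ at $\phi$.

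For irreducibility, I would parameterize $V_{\vec{a}}$ using the incidence variety $I_{\vec{a}}$ of pairs $(\iota, \phi)$, where $\iota: \O(\vec{a}) \hookrightarrow \O(1)^{n-1}$ is a subbundle inclusion and $\phi: \O(1)^{n-1} \to \O(d)$ satisfies $\ker \phi = \iota(\O(\vec{a}))$. Forgetting $\phi$ presents $I_{\vec{a}}$ as a $\gg_m$-bundle (the fiber being the choice of isomorphism $\O(1)^{n-1}/\iota(\O(\vec{a})) \xrightarrow{\sim} \O(d)$) over an open subset of the affine space $\mathrm{Hom}(\O(\vec{a}), \O(1)^{n-1})$, so $I_{\vec{a}}$ is smooth and irreducible. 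Forgetting $\iota$ presents $V_{\vec{a}}$ as the base of a principal $\mathrm{Aut}(\O(\vec{a}))$-bundle from $I_{\vec{a}}$, and since $\mathrm{Aut}(\O(\vec{a}))$ is a connected algebraic group, $V_{\vec{a}}$ is also smooth and irreducible. The main obstacle I anticipate is verifying the non-emptyness of $I_{\vec{a}}$---equivalently, the existence of at least one short exact sequence $0 \to \O(\vec{a}) \to \O(1)^{n-1} \to \O(d) \to 0$ for each admissible $\vec{a}$---which is a classical fact but requires either a direct construction or an inductive specialization argument starting from the balanced type.
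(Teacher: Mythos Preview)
Your proposal is essentially correct and in fact parallels both of the arguments the paper gives: your Kodaira--Spencer surjectivity computation is exactly the paper's ``alternative proof'' of the dimension statement, and your incidence variety $I_{\vec{a}}$ is, after unwinding, the paper's $\Omega$ from its first proof. The cohomological vanishings you invoke ($a_j \leq 1$ forces $H^1(\O(1-a_j))=0$) are precisely the ones used there.

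There is one gap to close. You assert that $\mu: \Sigma^{L_0} \to V$ is a surjective affine bundle, but $\Sigma^{L_0}$ consists only of \emph{smooth} hypersurfaces through $L_0$, so $\mu$ is the restriction of a linear map to an open subset of its source. Its fibers are open subsets of affine spaces, not affine spaces, and surjectivity fails outright: if the $g_i$ share a common zero on $L_0$ then every preimage is singular there. The fix is to work first with the full linear space $W$ of degree-$d$ forms vanishing on $L_0$, where the extended map $\tilde\mu: W \to V$ genuinely is a linear surjection; then $\tilde\mu^{-1}(V_{\vec{a}})$ inherits smoothness, irreducibility, and codimension from $V_{\vec{a}}$, and $\Sigma^{L_0}_{\vec{a}}$ is an open subset of it. What remains is nonemptiness of $\Sigma^{L_0}_{\vec{a}}$, and this---rather than the existence of an exact sequence $0 \to \O(\vec{a}) \to \O(1)^{n-1} \to \O(d) \to 0$, which is elementary once $a_j \leq 1$---is where an argument is actually needed. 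For $\phi \in V_{\vec{a}}$ the components $g_i$ have no common zero on $L_0$ (since $\phi$ is a bundle surjection), so any $f$ with $\tilde\mu(f)=\phi$ is smooth along $L_0$; Bertini then gives a globally smooth member of the linear system $\sum_{i\geq 2} x_i g_i + (x_2,\ldots,x_n)^2$. The paper carries out exactly this step.
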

\begin{proof}
First note that, like the universal Fano scheme, the projection onto the second factor $\rho_{\vec{a}}: \Sigma_{\vec{a}} \rightarrow \gg(1, n)$ is a fiber bundle. Thus, to prove the theorem, it will suffice to prove that the fibers of $\rho_{\vec{a}}: \Sigma_{\vec{a}} \rightarrow \gg(1, n)$ are smooth and irreducible of codimension $u(\vec{a})$ in the fibers of the projection of the universal Fano scheme onto its second factor, $\rho: \Sigma \rightarrow \gg(1, n)$. 

So, fix a line $L = V(x_2, \ldots, x_n)$ and let
\[\Omega = \{(X, \varphi) \in U \times \mathrm{Aut}(\O(\vec{a})) : X \supset L \text{ and } \varphi: \O(\vec{a}) \cong N_{L/X}\},\]
where $U \subset \pp^N$ is the open subset parameterizing smooth hypersurfaces.
First consider the projection $\alpha: \Omega \rightarrow U$. The image of $\alpha$ is exactly $\rho_{\vec{a}}^{-1}(L)$, and $\alpha$ makes $\Omega$ into a fiber bundle over its image with smooth fibers isomorphic to $\mathrm{Aut}(\O(\vec{a}))$. Irreducibility and smoothness of $\rho_{\vec{a}}^{-1}(L)$ will thus follow from the same properties of $\Omega$.

Next, we construct a map $\beta: \Omega \rightarrow \mathrm{Hom}(\O(\vec{a}), \O(1)^{n-1})$ by setting $\beta((X, \varphi))$ to be the composition
\[\varphi: \O(\vec{a}) \cong N_{L/X} \longrightarrow N_{L/X} \cong \O(1)^{n-1}.\]
The image of $\beta$ is certainly contained in the open subset of injective maps, and we claim that all injective maps are in the image. Indeed, given any
\begin{center}
\begin{tikzcd}
&0 \arrow{r} & \O(\vec{a}) \arrow{r}{A} & \O(1)^{n-1},
\end{tikzcd}
\end{center}
the cokernel is a line bundle of degree $d$, so we have a short exact sequence
\begin{equation} \label{nor1}
0 \longrightarrow \O(\vec{a}) \xrightarrow{A} \O(1)^{n-1} \xrightarrow{(m_1, \ldots, m_{n-1})} \O(d) \longrightarrow 0.
\end{equation}
Explicitly, if we think of $A$ as being represented by an $(n-1) \times (n-2)$ matrix with polynomials of degree $1 - a_j$ in the $j$th column, then the $m_i$ are the maximal minors of $A$. We claim that
\begin{equation} \label{dee}
\beta^{-1}(A) = \left\{(V(f), \varphi): f = \sum_{i=2}^{n} x_i m_{i+1} + (x_2, \ldots, x_n)^2 \text{ smooth}\right\}.
\end{equation}
If $X = V(f)$ is any hypersurface containing $L$, then we have a short exact sequence
\begin{equation} \label{nor2}
0 \rightarrow N_{L/X} \xrightarrow{\iota} \O_L(1)^{n-1} \xrightarrow{(f_2, \ldots, f_n)} \O_L(d) \rightarrow 0,
\end{equation}
where $f_i = \frac{\partial f}{\partial x_i}|_L$. So for $(X, \varphi)$ to be in $\beta^{-1}(A)$, we must have $f_i = m_{i+1}$, and hence $f$ has the claimed form. On the other hand, whenever $f$ has this form, the right hand maps in \eqref{nor1} and \eqref{nor2} are equal, so by the universal property of kernel there exists a unique isomorphism of their kernels $\varphi: \O(\vec{a}) \cong N_{L/X}$ such that $\varphi \circ \iota = A$, implying $\beta((V(f), \varphi)) = A$.

Next, observe that the $m_i$ have no common zeros, so every $f$ of the form in \eqref{dee} is smooth along $L$. Thus, Bertini's theorem tells us that the general such $f$ is smooth. That is, $\beta^{-1}(A)$ is an open dense subset of this linear system. In particular, $\beta^{-1}(A)$ is smooth and irreducible of dimension
\[\dim\beta^{-1}(A) = (N+1) - d - 1 - (n-1)d .\]
Our assumption that $a_i \leq 1$ for all $i$ guarantees that the image $\beta(\Omega)$, which is equal to the injective maps, is an open dense subset of $H^0(\mathcal{H}om(\O(\vec{a}), \O(1)^{n-1}))$. It follows that $\Omega$ is smooth and irreducible of dimension
\[\dim \Omega = h^0(\mathcal{H}om(\O(\vec{a}), \O(1))) + (N+1) - d - 1 - (n-1)d.\]
Finally, since the fibers of $\alpha$ are copies of $\mathrm{Aut}(\O(\vec{a}))$, we have
\[\dim \alpha(\Omega) = h^0(\mathcal{H}om(\O(\vec{a}), \O(1))) + (N+1) - d - 1 - (n-1)d - h^0(\mathcal{E}nd(\O(\vec{a}))).\]
Hence,
\begin{align*}
\mathrm{codim}(\rho_{\vec{a}}^{-1}(L) \subset \rho^{-1}(L)) &= N-d-1 - \dim \alpha(\Omega) \\ 
&= d(n-1) - 1 - h^0(\mathcal{H}om(\O(\vec{a}), \O_L(1)^{n-1})) \\
&\qquad \qquad \qquad \quad + h^0(\mathcal{E}nd(\O(\vec{a}))).
\end{align*}
Next, observe that
\begin{align*}
h^0(\mathcal{H}om(\O(\vec{a}), \O(d))) &= \sum_{i=1}^{n-2} h^0(\O(d - a_i)) = \sum_{i=1}^{n-2} (d - a_i  +1) \\
&= (d+1)(n-2) - (n-d-1) \\
&= d(n-1) -1.
\end{align*}
After substituting this in above, applying $\mathcal{H}om(\O(\vec{a}), -)$ to the short exact sequence
\begin{center}
\begin{tikzcd}
&0 \arrow{r} &\O(\vec{a}) \arrow{r} &\O_L(1)^{n-1} \arrow{r} &\O(d)\arrow{r} &0,
\end{tikzcd}
\end{center}
and setting the alternating sum of the dimension of terms in the long exact sequence in cohomology to zero, we see that 
\[\mathrm{codim}(\rho_{\vec{a}}^{-1}(L) \subset \rho^{-1}(L)) = h^1(\mathcal{E}nd(\O(\vec{a}))) = u(\vec{a}),\]
as desired.
\end{proof}

Given that $\Sigma_{\vec{a}}$ has the expected codimension $h^1(\mathcal{E}nd(\O(\vec{a})))$ coming from deformation theory, one might wonder if there is a direct proof of this fact using deformation theory. This can be carried out as follows.

\begin{proof}[Alternative proof of dimension count]
For each point $(L, X) \in \Sigma$, there exists an analytic neighborhood $B$ of $(L, X)$ and a map $\phi: B \rightarrow \Delta = \mathrm{Def}(N_{L/X})$ so that 
\[\N|_{\pi^{-1}(B)} \cong \phi^* \cF\]
where $\cF \rightarrow \Delta \times \pp^1$ is the miniversal family. The claim will follow from showing that the differential of $\phi$ is surjective at $(L, X)$. To do this, we construct a set $B' \subset B$ containing $(L, X)$ which lifts an open neighborhood of  the distinguished point $0 \in \Delta$.

Consider the projections
\begin{center}
\begin{tikzcd}
&&\cF \arrow{d} & \\
&&\Delta \times \pp^1 \arrow{dl}[swap]{\alpha} \arrow{dr}{\beta}\\
&\Delta  && \pp^1.
\end{tikzcd}
\end{center}
First, we will show that there exists a neighborhood $V$ of $0 \in \Delta$ so that $\cF|_{\alpha^{-1}(V)}$ admits an inclusion into the trivial family $\beta^*\O(1)^{n-1}$. This is equivalent to finding a local section of $\alpha_*\mathcal{H}om(\cF, \beta^* \O(1)^{n-1})$ which is corresponds to an injective map on each fiber. Since each subbline bundle of $\cF_\delta$ for $\delta \in \Delta$ has degree at most $1$, we have 
\[h^0(\cF_\delta, \beta^*\O(1)^{n-1}) = (n-1)\sum_{i=1}^{n-2} (2-a_i(\delta)) = (n-1)(2(n-2)-n-d-1) ,\]
which is constant as $\delta$ varies over $\Delta$. Hence, the theorem on cohomology and base change tells us that $\alpha_*\mathcal{H}om(\E, \beta^* \O(1)^{n-1})$ is a vector bundle. Let $\iota$ be the inclusion
\[\iota: N_{L/X} \rightarrow N_{L/\pp^n} \cong \O(1)^{n-1}.\] Since the rank of a family of maps drops on closed subsets, we can find a local section of $\alpha_*\mathcal{H}om(\cF, \beta^* \O(1)^{n-1})$ which is $\iota$ over the origin and injective on each fiber $\cF_\delta$ for $\delta$ in some neighborhood $V$.

This gives rise to a short exact sequence
\[0 \rightarrow \cF|_{\alpha^{-1}(V)} \rightarrow \beta^*\op(1)^{n-1} \rightarrow \mathcal{L} \rightarrow 0\]
where $\L$ is a line bundle that restricts to $\op(d)$ on each fiber of $\alpha$. For each $v \in V$, the map $\beta^*\op(1)^{n-1} \rightarrow \L$ restricted to $\alpha^{-1}(v)$ is given by a collection of $n-1$ homogeneous polynomials $g_2(v; x_0,x_1), \ldots, g_n(v;x_0,x_1)$ of degree $d-1$. Fixing coordinates so that $L = V(x_2, \ldots, x_n),$ if $f$ is the defining equation of $X$, then 
\[g_i(0;x_0, x_1) = \left.\frac{\partial f}{\partial x_i}\right|_L.\] 
Next, let
\[\tilde{f} = f - \sum_{i=2}^n x_i \left.\frac{\partial f}{\partial x_i}\right|_L \in (x_2, \ldots, x_n)^2\]
and consider the polynoimals
\[f(v) = \sum_{i=2}^n x_ig_i(v; x_0, x_1) + \tilde{f}.\]
Note that all of the $f(v)$ vanish on $L$ and we have $f(0) = f,$ which is smooth. Since singular hypersurfaces are a closed subset, after restricting to a possibly smaller neighborhood $0 \in V' \subset V$, we will have $f(v)$ smooth for all $v \in V'$. In particular, the collection of hypersurfaces defined by $f(v)$ for $v \in V'$ is a slice around $(L, X)$ that $\phi$ maps one-to-one onto $V'$. 
\end{proof}

\section{The dimension of $F_{\vec{a}}(X)$ in general}
In this section, we determine the dimension of $F_{\vec{a}}(X)$ for general $X$.
It follows immediately from Theorem \ref{1} that if $u(\vec{a}) > 2n - d - 3$, then $F_{\vec{a}}(X)$ is empty for general $X$. On the other hand, if $u(\vec{a}) \leq 2n - d - 3$, then provided $\Sigma_{\vec{a}}$ dominates $U \subset \pp^N$, it follows that $\mathrm{codim}( F_{\vec{a}}(X) \subset F(X)) = u(\vec{a})$ for general $X$. Moreover, by upper semicontinuity of the dimension of fibers of $\Sigma_{\vec{a}} \rightarrow U \subset \pp^N$, to prove this holds, it suffices to find some $L \subset X$ such that $\dim T_LF_{\vec{a}}(X) = 2n - d - 3 - u(\vec{a})$.
In the following, we give an explicit description of the tangent space to $F_{\vec{a}}(X)$, and then exhibit such an $L \subset X$.

Let $P$ be the incidence correspondence
\[P = \{(p, L) \in X \times F(X): p \in L\},\]
and let $T = \mathrm{Spec} \cc[\epsilon]/(\epsilon^2)$.
Every $v: T \rightarrow F(X)$ in $\mathrm{Mor}_L(T, F(X)) = T_LF(X)$ gives rise to a diagram
\begin{equation} \label{deformation}
\begin{tikzcd}
&N_{L/X} \arrow{r} \arrow{d} &\E_{v} \arrow{r} \arrow{d} &\N_{P/X\times F(X)} \arrow{d} \\
&L \arrow{r} \arrow{d} &\mathcal{L}_v \arrow{r} \arrow{d} & P \arrow{d} \\
&\spec \cc \arrow{r} &T \arrow{r}{v} & F(X),
\end{tikzcd} 
\end{equation}
where $\E_v$ is a first order deformation of the vector bundle $N_{L/X}$. Given such an $\L_v \cong T \times \pp^1$, let $\beta: \L_v\rightarrow \pp^1$ be the projection map onto the second factor.

\begin{Lemma}
The tangent vector $v: T \rightarrow F(X)$ in $T_LF(X)$ is in the subspace $T_LF_{\vec{a}}(X)$ if and only if the associated first order deformation $\E_v$ of $N_{L/X}$ is trivial, meaning
\[\E_v \cong \beta^*\O_{\pp^1}(a_1) \oplus \ldots \oplus \beta^*\O_{\pp^1}(a_{n-2}).\]
\end{Lemma}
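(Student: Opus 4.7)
The plan is to reduce the statement directly to the functor-of-points description of $\Sigma_{\vec{a}}$ established in the previous lemma, applied to the base $Z = T = \spec \cc[\epsilon]/(\epsilon^2)$.

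First, I would identify the tangent space $T_L F_{\vec{a}}(X)$ with the set of morphisms $v : T \to F(X)$ sending the closed point to $L$ that furthermore factor through $F_{\vec{a}}(X) \subset F(X)$. Since $F_{\vec{a}}(X)$ is the fiber of $\Sigma_{\vec{a}} \to \pp^N$ over the point $[X]$, such a $v$ corresponds to a morphism $\eta : T \to \Sigma$ (sending the closed point to $(L,X)$ and composing trivially with the projection to $\pp^N$) that factors through the subscheme $\Sigma_{\vec{a}}$. Under this identification, the family $\eta^* \Phi \to T$ is precisely $\L_v \to T$ of diagram \eqref{deformation}, and the pullback $\tilde{\eta}^*\N$ is precisely the deformation $\E_v$ of $N_{L/X}$.

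Next, I would apply the functor of points lemma: $\eta$ factors through $\Sigma_{\vec{a}}$ if and only if $\tilde\eta^*\N = \E_v$ has constant local splitting type $\vec{a}$. To conclude, I would observe that because $T$ has a unique point and hence no nontrivial open covers, the local condition that $\E_v$ be isomorphic to $\alpha^*\O(\vec{a})$ on some open cover of $T$ collapses to the global condition that
\[\E_v \cong \alpha^*\O(\vec{a}) = \beta^*\O_{\pp^1}(a_1) \oplus \cdots \oplus \beta^*\O_{\pp^1}(a_{n-2})\]
as bundles on $\L_v \cong T \times \pp^1$, where $\alpha$ from the lemma coincides with $\beta$ in the notation of the present statement. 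This is exactly the claim.

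There is no real obstacle; the argument is essentially a translation of the already-proved functor-of-points characterization to the Artinian base $T$, where ``constant local splitting type'' is automatically a global isomorphism. The only thing to be careful about is matching up the diagrams: verifying that the $\E_v$ appearing in \eqref{deformation} really is the pullback of $\N = N_{\Phi/\Psi}$ along $\tilde{v}$, which is immediate from the fact that $\E_v$ was defined as the restriction of $\N_{P/X \times F(X)}$ to the first-order neighborhood $\L_v$ of $L$ in $P$ and that $N_{P/X\times F(X)}$ is the pullback of $\N$ to $P$ along the inclusion $F(X) \hookrightarrow \Sigma$.
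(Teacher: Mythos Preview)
Your proposal is correct and follows essentially the same approach as the paper: both reduce to the functor-of-points description of $\Sigma_{\vec{a}}$ (restricted to the fiber $F_{\vec{a}}(X)$) and then observe that over $T = \spec \cc[\epsilon]/(\epsilon^2)$, which has only one open set, ``constant local splitting type $\vec{a}$'' is simply the global splitting condition $\E_v \cong \beta^*\O(\vec{a})$. Your extra care in matching $\E_v$ with the pullback of $\N$ is a welcome elaboration of what the paper leaves implicit.
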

\begin{proof}
Restricting our description of the functor of points for $\Sigma_{\vec{a}}$ to the fiber of $\Sigma_{\vec{a}} \rightarrow U \subset \pp^N$ over $X$, we see that
\begin{align*}
&\mathrm{Mor}(T, F_{\vec{a}}(X)) \\
&=\{f \in \mathrm{Mor}(T, F(X)) : \tilde{f}^* N_{P/X \times F(X)} \text{ has constant local splitting type $\vec{a}$}\}.
\end{align*}
Since $T$ has only one open set, having constant local splitting type $\vec{a}$ is the same as splitting as above. Thus, $T_LF_{\vec{a}}(X) = \mathrm{Mor}_L(T, F_{\vec{a}}(X))$ is the claimed subspace of $T_LF(X)$.
\end{proof}

Using the inclusion $T_LF(X) = H^0(N_{L/X}) \rightarrow H^0(\O(1)^{n-1}) \cong T_L\gg(1, n)$, 
We can represent every $v \in T_LF(X)$ by a collection of $n-1$ linear forms $v = (v_2, \ldots, v_n)$ where $v_{i}(x_0, x_1) = v_i^{(0)} x_0 + v_i^{(1)} x_1$. Such a collection corresponds to the first order deformation 
\[\L \subset T \times X \text{ defined by } V(x_2 - \epsilon v_2, \ldots, x_n - \epsilon v_n).\]
 Equivalently, if $a_{ij}$ are affine coordinates on the Grassmannian around the line $L= V(x_2, \ldots, x_n)$, i.e.
 \[\left(\begin{matrix} a_{20} & a_{30} & \cdots & a_{n0} \\ a_{21} & a_{31} & \cdots & a_{n1} \end{matrix}\right) \quad \longleftrightarrow \quad L: x_i = a_{i0} x_0 + a_{i1} x_1,\]
  this corresponds to the morphism $T \rightarrow \gg(1, n)$ determined by the map of rings 
  \[\cc[a_{ij}] \rightarrow \cc[\epsilon]/(\epsilon^2), \qquad \text{defined by } a_{ij} \mapsto v_i^{(j)}\epsilon.\]

Given $L \subset X = V(f)$, we have seen that the splitting type of $N_{L/X}$ is determined by the polynomials
\[f_i(x_0, x_1) := \left.\frac{\partial f}{\partial x_i}\right|_L.\]
We will now see that if $L$ has splitting type $\vec{a}$, then the tangent space to $F_{\vec{a}}(X)$ at $L$ is in turn determined by the $f_i(x_0, x_1)$ and the higher derivatives
\[H_{ij}(x_0, x_1) := \left. \frac{\partial^2 f}{\partial x_i \partial x_j}\right|_L.\]
As always, choose coordinates so that $L = V(x_2, \ldots, x_n)$. We can write
\[\frac{\partial f}{\partial x_i} = f_i(x_0, x_1) + \sum_{j=2}^n x_j H_{ij}(x_0, x_1) + (x_2, \ldots, x_n)^2.\]
If $v \in T_LF(X) \subset H^0(\O(1)^{n-1})$, then pulling back the short exact sequence
\begin{center}
\begin{tikzcd}
&0 \arrow{r} &N_{P/ X \times F(X)} \arrow{r} &N_{P/\pp^n \times F(X)} \arrow{r} &N_{X \times F(X)/\pp^n \times F(X)}|_{P} \arrow{r} &0
\end{tikzcd}
\end{center}
to $\mathcal{L}$, we see that $\E_v$ fits into a short exact sequence
\begin{center}
\begin{tikzcd}
&0 \arrow{r} &\E_v \arrow{r} &\beta^*\O(1)^{n-1} \arrow{r} &\beta^*\O(d) \arrow{r} &0
\end{tikzcd}
\end{center}
where the map on the right is given by the collection of polynomials
\[\tilde{f}_i = \frac{\partial f}{\partial x_i}(x_0, x_1, \epsilon v_2, \ldots, \epsilon v_n) = f_i(x_0, x_1) + \epsilon \sum_{j=2}^n v_j H_{ij}(x_0, x_1).\]
The deformation $\E_v$ will be trivial --- i.e. a direct sum of $\beta^* \O(a_i)$ --- if and only if the $\tilde{f}_i$ satisfy the same syzygies as the $f_i(x_0, x_1)$, now with coefficients in $\cc[\epsilon]/(\epsilon^2)$. To make this precise, suppose that the maps in the normal bundle sequence at our line are given explicitly by
\[ 0 \rightarrow N_{L/X} \xrightarrow{(c_{ik})} \O(1)^{n-1} \xrightarrow{(f_2, \ldots, f_n)} \O(d) \rightarrow 0\]
where the $c_{ik}$ are homogenous polynomials of degree degree $1 - a_k$ in $x_0$ and $x_1$ which necessarily satisfy $\sum_{i=2}^n c_{ik} f_i = 0$. Then for $v$ to lie in $T_LF_{\vec{a}}(X)$, there must exist $c_{ik}'$ of degree $1-a_k$ such that
\[\sum_{i=2}^n(c_{ik} + \epsilon c_{ik}')\left(f_i + \epsilon \sum_{j=2}^n H_{ij} v_j\right) = 0.\]
Equivalently, this shows the following.

\begin{Lemma} \label{tlem}
A tangent vector $v \in T_LF(X) \cong H^0(N_{L/X}) \subset H^0(\O(1)^{n-1})$ is in the subspace $T_LF_{\vec{a}}(X)$ if and only if for each $k$,
\begin{equation} \label{tc}
\sum_{i=2}^n \sum_{j=2}^n c_{ik} H_{ij} v_j \in \mathrm{span}\{f_2, \ldots, f_n\} \text{ with coefficients of degree $1-a_k$}.
\end{equation}
\end{Lemma}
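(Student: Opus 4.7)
Plan: The geometric setup has essentially already been done in the paragraphs preceding the statement; what remains is to unpack the splitting condition from the previous lemma into the explicit syzygy equation \eqref{tc}. Recall we have the presentation
\[
0 \to \E_v \to \beta^*\O(1)^{n-1} \xrightarrow{(\tilde f_2,\ldots,\tilde f_n)} \beta^*\O(d) \to 0
\]
on $T\times\pp^1$, where $\tilde f_i = f_i + \epsilon\sum_{j} H_{ij}v_j$, and by the previous lemma $v\in T_LF_{\vec a}(X)$ iff $\E_v \cong \bigoplus_k \beta^*\O(a_k)$.

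Step 1. Translate the splitting condition into a lifting-of-syzygies condition. A map $\bigoplus_k\beta^*\O(a_k)\to\beta^*\O(1)^{n-1}$ is represented by an $(n-1)\times(n-2)$ matrix whose $(i,k)$ entry is a polynomial of degree $1-a_k$ with coefficients in $\cc[\epsilon]/(\epsilon^2)$; restricting to $\epsilon=0$ recovers the matrix $(c_{ik})$ defining the inclusion $N_{L/X}\hookrightarrow\O(1)^{n-1}$, so the general lift has the form $(c_{ik}+\epsilon c'_{ik})$ with $\deg c'_{ik}=1-a_k$. Such a lift lands in the kernel $\E_v$ precisely when
\[
\sum_{i=2}^{n}(c_{ik}+\epsilon c'_{ik})\Bigl(f_i+\epsilon\sum_{j=2}^{n}H_{ij}v_j\Bigr)=0 \qquad \text{for all } k.
\]
Conversely, any such lift defines an injection of $\bigoplus_k\beta^*\O(a_k)$ into $\E_v$; since source and target are vector bundles of the same rank $n-2$ and the same total degree on $T\times\pp^1$, this injection is forced to be an isomorphism.

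Step 2. Expand the displayed equation modulo $\epsilon^2$. The $\epsilon^0$ part vanishes by the original syzygy $\sum_i c_{ik} f_i=0$, and the $\epsilon^1$ part reads
\[
\sum_{i=2}^{n}\sum_{j=2}^{n} c_{ik}H_{ij}v_j \;=\; -\sum_{i=2}^{n}c'_{ik}f_i.
\]
The existence of polynomials $c'_{ik}$ of degree $1-a_k$ satisfying this identity is exactly the condition that the left-hand side lie in $\mathrm{span}\{f_2,\ldots,f_n\}$ with coefficients of degree $1-a_k$, which is \eqref{tc}. A quick degree check confirms consistency: both sides are homogeneous of degree $(1-a_k)+(d-1)=d-a_k$ in $x_0,x_1$, so the equation lives in $H^0(\O_{\pp^1}(d-a_k))$.

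The one mildly subtle point is the reverse direction in Step 1, where I must know that a lift of the syzygies produces an isomorphism and not merely an inclusion into $\E_v$; this is handled by the rank-and-degree dimension count on $\pp^1$ combined with flatness of $\E_v$ over $T$. Everything else is bookkeeping expansion of the deformation $\tilde f_i$ derived from the Taylor expansion of $\partial f/\partial x_i$ already carried out above.
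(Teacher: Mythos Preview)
Your proposal is correct and follows essentially the same route as the paper: the argument there is the paragraph immediately preceding the lemma, which asserts that $\E_v$ is trivial iff the syzygy matrix $(c_{ik})$ lifts to a matrix $(c_{ik}+\epsilon c'_{ik})$ annihilated by $(\tilde f_i)$, and then expands modulo $\epsilon^2$ exactly as in your Step~2. Your added remark on the converse---that a lift of the syzygies yields an isomorphism rather than merely a map into $\E_v$---is a detail the paper leaves implicit; the cleanest justification is that the map restricts to the identity on the closed fiber, hence is an isomorphism by Nakayama (your ``injection plus rank-and-degree'' phrasing reaches the same conclusion but assumes injectivity without argument).
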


This appears to be a somewhat complicated condition, but with nice choices of the $c_{ik}$ and $f_i$, the linear conditions in the lemma become clear. We use this strategy to prove the following.

\begin{Theorem} \label{tada}
If $u(\vec{a}) \leq 2n - d - 3$, then $F_{\vec{a}}(X)$ has codimemsion $u(\vec{a})$ for general $X$.
\end{Theorem}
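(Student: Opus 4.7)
The plan is to apply the criterion stated just before the theorem: by upper semicontinuity of fiber dimension for $\Sigma_{\vec{a}} \to U$, it suffices to exhibit a smooth hypersurface $X$ of degree $d$ containing a line $L$ with $N_{L/X} \cong \O(\vec{a})$ and
\[\dim T_L F_{\vec{a}}(X) \;=\; 2n - d - 3 - u(\vec{a}).\]
Equivalently (using that $\Sigma_{\vec{a}}$ is smooth of the expected dimension by Theorem \ref{1}), the goal is to produce a single $(L, X)$ at which the differential of the morphism $\Sigma_{\vec{a}} \to U$ is surjective, thus forcing the map to be dominant and the generic fiber to have the expected dimension.

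I would fix coordinates so that $L = V(x_2, \ldots, x_n)$ and select a convenient presentation
\[0 \to \O(\vec{a}) \xrightarrow{(c_{ik})} \O(1)^{n-1} \xrightarrow{(f_2, \ldots, f_n)} \O(d) \to 0,\]
with monomial entries $c_{ik}$ of degree $1-a_k$ in $x_0, x_1$, chosen so that the maximal minors $f_i$ have no common zero on $\pp^1$. Then take $X = V(f)$ with
\[f \;=\; \sum_{i=2}^n x_i f_i \;+\; \sum_{2 \leq i \leq j \leq n} x_i x_j H_{ij}(x_0, x_1) \;+\; g, \qquad g \in (x_2, \ldots, x_n)^3,\]
and $H_{ij}$ of degree $d-2$. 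Smoothness along $L$ is automatic from the non-vanishing of the $f_i$, and smoothness away from $L$ holds for generic $g$ by Bertini. The quadratic coefficients $H_{ij}$ are the parameters to tune.

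By Lemma \ref{tlem}, the tangent space $T_L F_{\vec{a}}(X) \subset H^0(N_{L/X})$ is the kernel of the linear map
\[\Theta_H \colon H^0(N_{L/X}) \longrightarrow \bigoplus_{k=1}^{n-2}\frac{H^0(\O(d-a_k))}{\bigl\langle x_0^s x_1^{1-a_k-s}f_i\bigr\rangle}, \qquad v \longmapsto \Bigl(\textstyle\sum_{i,j} c_{ik} H_{ij} v_j \Bigr)_{\!k}.\]
Tensoring the presentation sequence with $\O(-a_k)$ and passing to the long exact sequence in cohomology identifies the $k$-th target quotient with $H^1(\O(\vec{a})(-a_k))$, so the total target has dimension $\sum_k h^1(\O(\vec{a})(-a_k)) = h^1(\mathcal{E}nd\,\O(\vec{a})) = u(\vec{a})$. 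Since $\Theta_H$ depends linearly on the parameters $H_{ij}$ (and bilinearly on the pair $(v, H)$), the theorem reduces to showing that for a suitable choice of $H$ the map $\Theta_H$ is surjective. Once this holds, $\dim T_L F_{\vec{a}}(X)$ attains its minimum possible value, the fiber bound $\dim_L F_{\vec{a}}(X) \leq \dim T_L F_{\vec{a}}(X)$ combined with the lower bound from Theorem \ref{1} gives the claimed codimension.

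The heart of the argument is the construction of such an $H$. With monomial $c_{ik}$ both the source $H^0(N_{L/X})$ and each of the target quotients admit explicit monomial bases: the missing monomials $x_0^e x_1^{d-a_k-e}$ not in $\bigl\langle x_0^s x_1^{1-a_k-s}f_i\bigr\rangle$ index a basis of the $k$-th block. I would attempt to match each such missing monomial to a contribution $c_{ik} H_{ij} v_j$ coming from a single monomial choice of $H_{ij}$ and a monomial tangent direction $v = e_j x_0^a x_1^b$, ordering the choices so that the resulting matrix of $\Theta_H$ is upper-triangular. The main obstacle is the combinatorial bookkeeping for very uneven $\vec{a}$: there are $n-2$ target blocks of varying degrees, each matched contribution must not already lie in the image of $\oplus_i \O(1-a_k) \to \O(d-a_k)$ and must not interfere with the contributions matched to other missing monomials across different blocks. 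Managing this triangularity simultaneously for all $k = 1, \ldots, n-2$, and in particular verifying that the product $c_{ik} H_{ij}$ of monomials lands on the desired missing basis element, is where the explicit (rather than generic) nature of the construction becomes essential.
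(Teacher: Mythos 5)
Your overall strategy is the paper's: reduce to exhibiting one pair $L \subset X$ with $\dim T_L F_{\vec{a}}(X) = 2n-d-3-u(\vec{a})$, use Lemma \ref{tlem} to express $T_L F_{\vec{a}}(X)$ as the kernel of a linear map $\Theta_H$ depending on the second-order data $H_{ij}$, and identify the target with a $u(\vec{a})$-dimensional space. That framework is sound, and your identification of the $k$-th quotient with $H^1(\O(\vec{a})(-a_k))$ is a clean way to see that the total number of conditions is exactly $u(\vec{a})$. But the proof is not complete: the entire content of the theorem is the existence of an $H$ for which $\Theta_H$ is surjective, and you stop at ``I would attempt to match each missing monomial\ldots'' while conceding that the triangularity and non-interference across blocks is the hard part. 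That is precisely the step the paper carries out explicitly --- it writes down a block matrix $A$ for the inclusion $\O(\vec{a}) \to \O(1)^{n-1}$, computes its maximal minors to get the $f_i$, and then exhibits specific $H_{kj}$ for which the $u(\vec{a})$ coefficient conditions are visibly independent linear functionals on $H^0(N_{L/X})$. Without that construction (or a genericity argument replacing it), the theorem is not proved.

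You have also missed a reduction that both closes a gap in your dimension count and dissolves the combinatorial difficulty you flag. Your conclusion $\dim T_L F_{\vec{a}}(X) = h^0(N_{L/X}) - u(\vec{a})$ only yields the desired $2n-d-3-u(\vec{a})$ when $h^1(N_{L/X}) = 0$, i.e.\ when $a_1 \geq -1$; for $a_1 \leq -2$ the source of $\Theta_H$ is too large and $L$ need not even be a smooth point of $F(X)$. The paper observes at the outset that $a_1 \leq -2$ forces $u(\vec{a}) > 2n-d-3$, so under the hypothesis one may assume $\vec{a} = (-1, \ldots, -1, 0, \ldots, 0, 1, \ldots, 1)$ with $b$ copies of $-1$ and $m$ copies of $1$. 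With this reduction only the $m$ target blocks with $a_k = 1$ are nonzero, each of dimension $b$; the ``blocks of varying degrees'' you worry about disappear, and surjectivity of $\Theta_H$ becomes a concrete rank-$bm$ computation of the kind the paper performs. Make that reduction explicit and then finish the construction; as written, the argument is a plausible plan rather than a proof.
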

\begin{proof}
First note that if $a_1 \leq -2$, then $u(\vec{a})\geq h^0(\O(\vec{a})) > 2n - d - 3$, so we may restrict our attention to lines $L$ with
\[N_{L/X} \cong \O(-1)^{b} \oplus \O^{\ell} \oplus \O(1)^{m}.\]
Since $h^1(N_{L/X}) = 0$, such a line is always a smooth point of $F(X)$ with 
\[\dim T_LF(X) = 2n - d - 3.\]
The expected codimemension of lines with this splitting type is $bm$, so we can assume $bm \leq 2n - d - 3$.
To prove the theorem, it suffices to find $L \subset X$ with this normal bundle where $T_L F_{\vec{a}}(X) \subset T_LF(X)$ has codimension at least $bm$. Then upper semicontinuity of dimension of fibers together with Theorem \ref{uni-ecd} will imply that $\codim F_{\vec{a}}(X) = u(\vec{a})$ for general $X$.

Now, consider the map
\[\O(-1)^b \oplus \O^{\ell} \oplus \O(1)^m \rightarrow \O(1)^{n-1}\]
given by
\[
\setcounter{MaxMatrixCols}{20}
A = \left(\begin{matrix} x_0^2 & 0 & \cdots  & 0 & 0 & 0 & \cdots & 0 & 0 & 0 & \cdots & 0\\
x_1^2 & x_0^2 & \cdots & 0 & 0  & 0 & \cdots & 0 & 0 & 0 & \cdots & 0\\
0 & x_1^2 & \ddots & \vdots & \vdots & \vdots && \vdots & \vdots & \vdots && \vdots\\
\vdots & \vdots &  \ddots & x_0^2 & 0  & 0 & \cdots & 0 & 0 & 0 & \cdots & 0\\
0 & 0 &\cdots & x_1^2 & x_0 & 0 & \cdots & 0 & 0 & 0 & \cdots & 0\\
0 & 0 & \cdots & 0 & x_1 & x_0 & \cdots & 0 & 0 & 0 & \cdots & 0\\
0 & 0 & \cdots & 0 & 0 & x_1 & \ddots & \vdots & \vdots & \vdots && \vdots\\
\vdots & \vdots & & \vdots & \vdots & \vdots & \ddots & x_0 & 0  & 0 & \cdots & 0\\
0 & 0 & \cdots & 0 & 0 & 0 & \cdots & x_1 & 0 & 0  & \cdots & 0\\
0 & 0 & \cdots & 0 & 0 & 0 & \cdots & 0 & 1 & 0  & \cdots & 0\\
0 & 0 & \cdots & 0 & 0 & 0 & \cdots & 0 & 0 & 1 & \cdots & 0 \\
\vdots & \vdots && \vdots & \vdots & \vdots && \vdots & \vdots & \vdots & \ddots & \vdots \\
\undermat{b}{0 & 0 & \cdots & 0 &} \undermat{\ell}{0 & 0 & \cdots & 0 &}\undermat{m}{ 0 & 0 & \cdots & 1}
\end{matrix}\right)
\]

\vspace{.15in}
\noindent
The map to the cokernel of this inclusion is given by the maximal minors of $A$. If $A_i$ is $A$ with the $i$th row deleted, then we have
\[m_i = (-1)^i \det A_i = (-1)^i\begin{cases} x_0^{2i-2} x_1^{d-2i+1} & i \leq b \\ x_0^{b+i-1} x_1^{d-b-i} & b+1 \leq i \leq b + \ell\\ 0 & b+\ell+1 \leq i \leq n-2.\end{cases} \]
Since the $m_i$ have no common zeroes, we can apply Bertini's theorem to find some smooth hypersurface $X = V(f)$ containing $L$ such that $f_{i+1} = m_i$ are the partial derivatives of $f$ restricted to $L$.
For $b+\ell+2 \leq k \leq n-1$, we have $c_{i,k-1} = \delta_{i,k}$, so the condition \eqref{tc} that $v \in T_LF_{\vec{a}}(X)$ becomes that the coefficient of $x_0^e x_1^{d-e-1}$ in 
\begin{equation} \label{here}
\sum_{j=2}^n H_{kj}(x_0, x_1) v_j(x_0, x_1) 
\end{equation}
must vanish for odd $e < 2b$. The general form of an element 
\[v \in T_LF(X) = H^0(N_{L/X}) \subset H^0(\O(1)^{n-1})\]
is
\begin{align*}
v = (\underbrace{0, \ldots, 0}_{j=2,\ldots,b+1}, \underbrace{\lambda_1x_0, \lambda_1 x_1 + \lambda_2 x_0, \ldots, \lambda_{\ell-1} x_1 + \lambda_\ell x_0, \lambda_\ell x_1,}_{j=b+2, \ldots, b+\ell+1} \\
\underbrace{ \mu_1^{(0)} x_0 + \mu_1^{(1)}x_1, \ldots, \mu_m^{(0)} x_0 + \mu_m^{(1)} x_1}_{j=b+\ell+2, \ldots, n-2}).
\end{align*}
If we can find a smooth $X$ with appropriate $H_{kj}$, then setting the coefficient of $x_0^e x_1^{d-e-1}$ in \eqref{here} to zero will give an independent linear condition on the coordinates $\lambda_i$ and $\mu_i^{(j)}$ for each of the $b$ possible values of $e$ and $m$ possible values of $k$, showing that $\codim(T_LF_{\vec{a}}(X) \subset T_LF(X)) = bm$. Suppose that
\[H_{kj}(x_0, x_1) = \begin{cases} x_1^{d-2} + x_0^3 x_1^{d-5} & \text{if $j = k \geq b+\ell+2$}\\ x_0^{2i}x_1^{d-2i-2} & \text{if } j -b = i + (k-\ell-b-2)(b-2) \\ 0 & \text{otherwise.}  \end{cases} \]
Above, if the exponent on a variable is negative, it is understood that we omit that term.
With this choice, setting the coefficient of $x_0x_1^{d-2}$ to zero in \eqref{here} forces each $\mu_k^{(0)} = 0$. Similarly, if $e = 3< 2b$, setting the coefficient of $x_0^3x_1^{d-4}$ to zero forces each $\mu_k^{(1)} = 0$. Then for the remaining $b-2$ values $e  = 5,\ldots, 2b - 1$ and each $b + \ell + 2 \leq k \leq n-1$, the only contribution to $x_0^e x_1^{d-e-1}$ comes from the
\[j = b + \frac{e-1}{2} + (k-\ell-b-2)(b-2)\]
term of the sum in \eqref{here}, when $\lambda_j x_0$ meets $H_{kj} = x_0^{e-1}x_1^{d-e-1}$. This gives $m(b-2)$ more conditions $\lambda_j = 0$.
Finally, Bertini's theorem guarantees that there exists a smooth hypersurface with these $f_i$ and $H_{kj}$, so we are done.
\end{proof}

\section{Important examples}
\subsection{Cubic hypersurfaces}

As we have seen, there are only two possible splitting types for the normal bundle of a line $L$ on a cubic hypersurface $X$: either
\begin{align*}
N_{L/X} &= \O \oplus \O \oplus \O(1)^{n-4} & \quad & \text{(balanced)} \\
\intertext{or}
\qquad N_{L/X} &= \O(-1) \oplus \O(1)^{n-3} &\quad &\text{(unbalanced).}
\end{align*}
Since $h^0(N_{L/X}) = 2n - 6$ for either splitting type, the Fano scheme is always smooth of that dimension.
To simplify notation, let $F' = F_{(-1, 1, \ldots, 1)}(X)$ be the locus of unbalanced lines. The expected codimension of $F'(X)$ is $n-3$.

Specializing our analysis in the previous section gives rise to a concrete description of the tangent space to $F'(X)$. Given a line $L \in F'(X)$, we can always choose coordinates so that $L = V(x_2, x_3, x_4)$. Looking at our favorite short exact sequence of normal bundles,
\[0 \rightarrow N_{L/X} \rightarrow \O(1)^{n-1} \xrightarrow{(f_2, \ldots, f_n)} \O(d) \rightarrow 0,\]
where $f_i = \left.\frac{\partial f}{\partial x_i}\right|_L$, we see that
\[L \in F'(X) \quad \Longleftrightarrow \quad \dim \mathrm{span}\{f_2, \ldots, f_n\} = 2.\] 
Given $L \in F'(X)$, we can thus choose coordinates so that $\frac{\partial f}{\partial x_i} = 0$ for all $i \geq 4$. Next, note that the span of two quadratic polynomials with no common zeros contains exactly two squares. Hence, we can choose coordinates $x_0$ and $x_1$ on $L$ so that the two squares in the span of $f_2$ and $f_3$ are $x_0^2$ and $x_1^2$. Finally, after a possible linear change of variables between $x_2$ and $x_3$, 
we can write
\begin{equation} \label{fform}
f = x_2x_0^2 + x_3x_1^2 + \sum_{i \geq j} h_{ij}(x_0, x_1) x_i x_j + (x_2, x_3, x_4)^3.
\end{equation}

With this set up, the tangent space to the Fano scheme is simply
\[T_LF(X) = \{v = (v_2, \ldots, v_n) \in H^0(\O(1)^{n-1}) \cong T_L\gg(1, n): v_2 = v_3 = 0\}.\]
Then Lemma \ref{tlem} tells us that the condition for $v = (0,0,v_4, \ldots, v_n)$ to be in the tangent space is that the coefficient of $x_0x_1$ in $\sum_{j} H_{ij}(x_0, x_1)v_j(x_0, x_1)$ vanishes for each $i \geq 4$.
Writing $H_{ij}(x_0, x_1) = H_{ij}^{(0)}x_0 + H_{ij}^{(1)} x_1$ and identifying coordinates on $T_LF(X)$ as the coefficients $v_j^{(0)}$ and $v_j^{(1)}$ for $j \geq 4$, we have shown the following.
\begin{Lemma} \label{tlf}
In our chosen coordinates,
\[T_LF'(X) = \ker \left(\begin{matrix} H_{44}^{(1)} & H_{44}^{(0)} & \cdots &  H_{4n}^{(1)} & H_{4n}^{(0)} \\
\vdots & \vdots & \ddots & \vdots & \vdots \\
H_{4n}^{(1)} & H_{4n}^{(0)} & \cdots & H_{nn}^{(1)} & H_{nn}^{(0)} \end{matrix}\right) \subset T_LF(X).\]
The codimension of $T_LF' \subset T_LF$ is thus the rank of this matrix. 
\end{Lemma}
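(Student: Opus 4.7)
My plan is to unwrap the general condition in Lemma \ref{tlem} using the explicit normal form \eqref{fform}, in which $(f_2, f_3, f_4, \ldots, f_n) = (x_0^2, x_1^2, 0, \ldots, 0)$. For this presentation, the inclusion $N_{L/X} \cong \O(-1) \oplus \O(1)^{n-3} \hookrightarrow \O_L(1)^{n-1}$ admits the explicit splitting whose matrix $(c_{ik})$ has first column (for the $\O(-1)$ factor, $k=1$) equal to $(x_1^2, -x_0^2, 0, \ldots, 0)^T$, and whose remaining $n-3$ columns (for the $\O(1)$ factors) are the standard basis vectors $e_4, \ldots, e_n$. I would begin by confirming that these columns pair to zero against $(f_2, \ldots, f_n)$, so they do define the normal-bundle inclusion.

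Next I would apply condition \eqref{tc} summand by summand, using also the already-established fact that $v_2 = v_3 = 0$ for $v \in T_LF(X)$. For $k = 1$ (where the allowed coefficients have degree $1-a_1 = 2$), the expression
\[\sum_{i,j} c_{i,1} H_{ij} v_j \;=\; x_1^2 \sum_{j \geq 4} H_{2j} v_j \;-\; x_0^2 \sum_{j \geq 4} H_{3j} v_j\]
lies tautologically in $\mathrm{span}\{f_2, f_3\} = \mathrm{span}\{x_0^2, x_1^2\}$ with degree-$2$ coefficients, so this condition is vacuous. For each $k = 2, \ldots, n-2$, setting $i = k+2$ reduces \eqref{tc} to the requirement that $\sum_{j \geq 4} H_{ij}(x_0, x_1) v_j(x_0, x_1)$ lie in the two-dimensional subspace $\mathrm{span}\{x_0^2, x_1^2\}$ of quadratic forms, i.e.\ that its coefficient of $x_0 x_1$ vanish; this is exactly the condition highlighted in the paragraph preceding the lemma.

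Finally I would expand $H_{ij} = H_{ij}^{(0)} x_0 + H_{ij}^{(1)} x_1$ and $v_j = v_j^{(0)} x_0 + v_j^{(1)} x_1$ to read off the $x_0 x_1$-coefficient of $H_{ij} v_j$ as $H_{ij}^{(0)} v_j^{(1)} + H_{ij}^{(1)} v_j^{(0)}$. Ordering the coordinates on $T_LF(X)$ as $(v_4^{(0)}, v_4^{(1)}, \ldots, v_n^{(0)}, v_n^{(1)})$ and the $n-3$ equations by $i = 4, \ldots, n$, the resulting linear system is precisely the one displayed in the statement; the $H_{4n}$ entries appearing in the last row reflect the identity $H_{ni} = H_{in}$. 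There is no genuine obstacle: the argument is a direct specialization of Lemma \ref{tlem}, and the only subtle point is recognizing that the $\O(-1)$ condition is automatic, so that exactly $n-3$ nontrivial linear conditions contribute to the matrix.
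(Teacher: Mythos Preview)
Your argument is correct and follows the same route as the paper: both specialize Lemma \ref{tlem} to the normal form \eqref{fform} and read off the matrix from the $x_0x_1$-coefficient conditions for $i \geq 4$. Your write-up is in fact more complete than the paper's, which simply asserts the condition without exhibiting the matrix $(c_{ik})$ or verifying that the $\O(-1)$ column imposes no constraint; your explicit check that the $k=1$ expression automatically lies in $\mathrm{span}\{x_0^2,x_1^2\}$ with degree-$2$ coefficients fills that gap.
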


Because $\Sigma_{(-1, 1, \ldots, 1)}$ is smooth, Sard's theorem tells us that $F'(X)$ is smooth in general. However, for some $X$, the locus $F'(X)$ may be singular, and we can ask: what sorts of singularities occur? We use the above description of the tangent space to $F'(X)$ to answer this question for cubic threefolds.

\begin{Theorem}
For  $X \subset \pp^4$ a smooth cubic hypersurface, $F' = F'(X)$ is a curve that has at worst nodes as singularities.
\end{Theorem}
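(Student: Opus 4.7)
The plan is to prove the two assertions — that $F'(X)$ is a curve, and that its singularities are at worst nodes — in turn. For the ``curve'' part, the Fano surface $F(X)$ of a smooth cubic threefold is classically known to be smooth and irreducible of dimension $2$, and Example~\ref{cubie4} shows that $F'(X)$ is cut out inside $F(X)$ by the single equation $\det C(-1) = 0$. Since $F(X)$ contains lines with balanced normal bundle (a classical fact about Fano surfaces of smooth cubic threefolds), $F'(X)$ is a proper closed subscheme and hence pure of dimension $1$. For the node statement, I would begin by identifying the singular locus: for $n = 4$, Lemma~\ref{tlf} gives $T_L F'(X)$ as the kernel of the $1 \times 2$ matrix $(H_{44}^{(1)}, H_{44}^{(0)})$ inside $T_L F(X)$, so the singular points of $F'(X)$ are exactly those $L$ where $H_{44}(x_0, x_1) \equiv 0$. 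Fix such an $L$, and choose coordinates so that $L = V(x_2, x_3, x_4)$ and $f$ has the form~\eqref{fform}; the condition $H_{44} = 0$ then says that the coefficient of $x_4^2$ in the quadratic part $Q$ of $f$ vanishes.

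The core of the argument is a second-order expansion of $\det C(-1)$ at $L$. Since $H^1(N_{L/X}) = 0$, the Fano surface $F(X)$ is smooth at $L$ and locally parametrized by $(a_{40}, a_{41})$, so the remaining affine Grassmannian coordinates $a_{20}, a_{21}, a_{30}, a_{31}$ become implicit functions of $(a_{40}, a_{41})$. A direct degree count on the ``line $\subset X$'' equations shows that $H_{44} = 0$ kills the would-be quadratic contributions to these implicit functions, so they start at order $\geq 3$ in $(a_{40}, a_{41})$. Consequently every entry $C_{4, k}$ of the third column of $C(-1)$ is of order $\geq 2$, and cofactor expansion of the $3 \times 3$ determinant along that column gives
\[
\det C(-1) \;=\; -C_{4,1} + O(a^3) \;=\; -6\nu\, a_{40} a_{41} + O(a^3),
\]
where $\nu$ is the coefficient of $x_4^3$ in $f$.

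It remains to exclude $\nu = 0$ using the smoothness of $X$. If $H_{44} = \nu = 0$, the cubic $f$ vanishes identically on the plane $\pi = V(x_2, x_3) \subset \pp^4$, so we may write $f = x_2 g + x_3 h$ for quadratic forms $g, h$. The partials $\partial f / \partial x_2, \partial f / \partial x_3$ restrict on $\pi$ to $g|_\pi, h|_\pi$, two quadratic forms on $\pi \cong \pp^2$ which by B\'ezout share a common zero — a singular point of $X$, contradicting our hypothesis. Hence $\nu \neq 0$, the binary quadratic form $-6\nu a_{40} a_{41}$ is non-degenerate, and $L$ is an ordinary double point of $F'(X)$. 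The main technical hurdle is the second-order expansion in the middle paragraph, which requires careful control of the implicit functions $a_{2k}, a_{3k}$ and of the cofactor expansion; the other ingredients are either classical or immediate from the setup in Section~5.1.
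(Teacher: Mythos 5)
Your proof follows essentially the same route as the paper's: identify the singular points of $F'$ as the lines with $H_{44} \equiv 0$ via Lemma \ref{tlf}, then show that the local equation of $F'$ on the smooth surface $F(X)$ (with local coordinates $a_{40}, a_{41}$) has quadratic leading term a nonzero multiple of $a_{40}a_{41}$, so the tangent cone is a union of two distinct lines and $L$ is a node. Your explicit verification that the coefficient $\nu$ of $x_4^3$ is nonzero --- because a smooth cubic threefold contains no $2$-plane --- is a worthwhile addition: the paper relies on this only implicitly, in the remark that the plane section $V(x_2,x_3)\cap X$ equals $3L$.
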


\begin{proof}
Suppose $L \in F'$ and $\dim T_LF' = 2$. Then $T_LF' = T_LF$, so after choosing coordinates so that $f$ has the form in \eqref{fform}, Lemma \ref{tlf} tells us that $h_{44} = 0$. Geometrically, this says that the intersection of $X$ with the $2$-plane defined by $x_2 = x_3 = 0$ is $3L$.

We now use the explicit local equations Example \ref{cubie4} to compute the tangent cone to $F'$ at $L$. Let $a_{ij}$ be local coordinates on $\gg(1, 4)$ as in that section.
First, note that the Fano scheme $F(X)$ is cut out by $4$ polynomials. These polynomials are the coefficients of the polynomial in $x_0$ and $x_1$ that results from plugging $x_i = a_{i0} x_0 + a_{i1} x_1$ for $i = 2, 3, 4$ into $f$. With $f$ having the form \eqref{fform}, the leading terms of these four polynomials are $a_{20}, a_{21}, a_{30},$ and $a_{31} $, so the tangent plane to the Fano scheme is 
\[T_LF(X) = V(a_{20}, a_{21}, a_{30}, a_{31}).\]
Note also that since $h_{44} = 0$, the quadratic terms of these four polynomials are in the ideal $I = \langle a_{20}, a_{21}, a_{30}, a_{31}\rangle.$
 
The locus $F' \subset \gg(1, 4)$ is cut out by the equations of $F$ together with the determinant of the matrix.
\[C(-1) = \left(\begin{matrix}C_{20} & C_{30} & C_{40} \\
C_{21} & C_{31} & C_{41} \\
C_{22} & C_{32} & C_{42} \\
\end{matrix}\right)\]
where $C_{ij}$ are polynomials in $a_{k\ell}$ defined implicitly by
\[\frac{\partial f}{\partial x_i}(x_0, x_1, a_{20}x_0+a_{21}x_1, a_{30}x_0+a_{31}x_1, a_{40}x_0+a_{41}x_1) = C_{i0} x_1^2 + C_{i1} x_1x_0 + C_{i2} x_0^2.\]
 We claim that, modulo the ideal $I$, the leading term of $\det C(-1)$ is a multiple of $a_{40}a_{41}$.
First, observe that $C_{22}$ and $C_{30}$ are the only entries that begin with constant terms, the rest being linear or higher order in the $a_{ij}$. Next, we have
\[\frac{\partial f}{\partial x_4} = \lambda x_4^2 + (x_2, x_3),\]
so the linear terms of the entries $C_{4i}$ in the last column are all in the ideal $I$. 

Putting these observations together, we see that the leading term of $\det C(-1)$ modulo $I$ is $C_{41}$ mod $I$, which is $2 \lambda a_{40}b_{41}$. 
Thus, $I(F')$ can be generated by five polynomials whose leading terms are $a_{20}, a_{21}, a_{30}, a_{31}$, and $a_{40}a_{41}$. The leading term of any polynomial in $I(F')$ is therefore in the ideal generated by these leading terms. This shows that the tangent cone to $L \in F'$ is a union of the two distinct lines $V(a_{40})$ and $V(a_{41})$ in the tangent plane, so $L\in F'$ must be a node.
\end{proof}

\subsection{The Fermat quartic threefold} \label{fsec}
Although we know that $F_{\vec{a}}(X)$ has the expected dimension for general $X$, sometimes $F_{\vec{a}}(X)$ has larger dimension for certain $X$.

One illuminating example of this is when $X = V(x_0^4 + \ldots + x_4^4) = V(f) \subset \pp^4$ is the Fermat quartic threefold. If $H$ is a hyperplane defined by $x_i = \zeta x_j$ for $\zeta$ a $4$th root of $-1$, then the hyperplane section $H \cap X$ is a cone over a smooth quartic curve in $\pp^2$. We claim that for any line $L$ on this cone, we have $N_{L/X} = \O(-2) \oplus \O(1)$. After possibly permuting coordinates (which preserves the equation of $X$), we can assume $L$ has the form 
\begin{equation}\label{above}
L = \{[x_0, x_1, x_0a, x_0b, x_1\zeta]:[x_0, x_1] \in \pp^1\}
\end{equation}
for some $a, b \in \cc$ with $a^4+b^4 +1 = 0$. From this, we see that
\begin{equation}
\mathrm{span}\left\{\left.\frac{\partial f}{\partial x_i}\right|_L\right\} = \langle x_0^3, x_1^3\rangle,
\end{equation}
which is two-dimensional. In particular, the right hand map in the sequence 
\begin{equation*}
\begin{tikzcd}
&0 \arrow{r} &N_{L/X} \arrow{r} &\O(1)^3 \arrow{r} & \O(4) \arrow{r} & 0
\end{tikzcd}
\end{equation*}
is given by a collection of three necessarily linearly dependent polynomials, implying $\O(1)$ includes into its kernel, $N_{L/X}$. It follows by degree considerations that $N_{L/X} = \O(-2) \oplus \O(1)$. Because $h^0(N_{L/X}) = 2$, the family of lines on each of these hyperplane sections corresponds to a nonreduced quartic curve on $F(X)$.
Since there are $40$ such hyperplanes --- corresponding to the ${5 \choose 2}$ choices of two coordinates $x_i, x_j$ and $4$ choices of $\zeta$ --- computing the class $[F(X)] = c_5(\mathrm{Sym}^4\mathcal{S}^*) = 320\sigma_{3,2}$ in the Chow ring of the Grassmannian shows that $F(X)$ is exactly the union of these $40$ non-reduced curves.

This is important example in that \textit{every} line on $F(X)$ has normal bundle $\O(-2) \oplus \O(1)$. That is, the underlying sets of $F(X)$ and $F_{(-2, 1)}(X)$ are equal. Nevertheless, $F(X)$ and $F_{(-2,1)}(X)$ are \textit{not} equal as schemes. In fact, $F_{(-2,1)}(X)$ is the reduced subscheme of $F(X)$, as can be seen from computing the tangent space $T_LF_{(-2, 1)}(X)$ for general $L \in F(X)$. Suppose $L$ has the form in \eqref{above}. Let
\[w_2 = x_2 - ax_0, \qquad w_3 = x_3 - bx_0, \qquad \text{and} \qquad w_4 = x_4 - \zeta x_1.\]
Then $L = V(w_2, w_3, w_4)$, and we can rewrite the defining equation of $X$ in these coordinates as
\begin{align*}
&x_0^4+x_1^4 + (w_2 + ax_0)^4 + (w_3+bx_0)^4 + (w_4 + \zeta x_1)^4 \\
&\qquad \qquad \qquad = 4w_2a^3x_0^3 + 4w_3b^3x_0^3 + 4w_4\zeta^3x_1^3 \\
&\qquad \qquad \qquad \quad + 6w_2^2a^2x_0^2 + 6w_3^2b^2x_0^2 + 6w_3^2\zeta^2x_1^2 + (w_2, w_3, w_4)^3.
\end{align*}
This tells us that
\[T_LF(X) = \{(v_2, v_3, v_4): v_4 = 0, a^3v_2 + b^3 v_3 = 0\}.\]
The inlcusion $N_{L/X} \rightarrow \O(1)^3$ can thus be represented by
\[\left(\begin{matrix} 0 & 0 \\ 0 & b^3 \\ 0 & -a^3 \end{matrix}\right),\]
so Lemma \ref{tlem} tells us that $T_LF'(X) \subset T_LF(X)$ is determined by the condition
\begin{equation} \label{xeq}
x_0^2(b^3a^2 v_2 - a^3b^2v_3) \in \mathrm{span}\{x_0^3, x_1^3\}.
\end{equation}
This is equivalent to
\[b^3a^2v_2^{(1)} - a^3b^2 v_3^{(1)} = 0.\]
If $a$ and $b$ are both non-zero, then plugging in $v_3^{(1)} = -\frac{a^3}{b^3} v_2^{(1)}$ we find
\begin{align*}
0 = v_2^{(1)}\left(b^3a^2 - a^3 b^2\left(-\frac{a^3}{b^3}\right)\right) = v_2^{(1)} \left(\frac{a^2}{b}\right)(b^4 + a^4) \quad &\Rightarrow \quad v_2^{(1)} = 0  \\
&\Rightarrow \quad v_3^{(1)} = 0.
\end{align*}

Thus, for general lines $L \in F(X)$ we have $\dim T_LF'(X) = 1$, showing that $F'(X)$ is reduced. On the other hand, if one of $a$ or $b$ is zero, then $T_LF'(X) = T_LF(X)$ because the left-hand side of \eqref{xeq} is zero, so the condition is automatically satisfied. These lines $L$ correspond points where two components of $F(X)$ intersect.

\section{The class of $\overline{F_{\vec{a}}(X)}$} \label{st3}
For general $X$, knowing that $F_{\vec{a}}(X)$ has the expected codimension in $F(X)$ allows us to compute the class of $\overline{F_{\vec{a}}(X)}$ in the Chow ring of $\gg(1, n)$. As noted at the beginning of the proof of Theorem \ref{tada}, for general $X$, $F_{\vec{a}}(X)$ is only nonempty when
\[\vec{a} = (-1, \ldots, -1, 0, \ldots, 0, 1, \ldots, 1) = (-1^b, 0^\ell, 1^m),\]
where $-b+m = n-d-1$ and $b+\ell+m = n-2$.

\begin{Proposition} \label{classp}
Let $\vec{a} = (-1^b, 0^\ell, 1^m)$ as above. Then for general $X$, the class of $\overline{F_{\vec{a}}(X)}$ in the Chow ring of the Grassmannian is
\[[\overline{F_{\vec{a}}(X)}]  = [F(X)] \cdot \Delta^{m}_{b}\left(\frac{c(\mathcal{Q})}{c(\mathrm{Sym}^{d-1} \mathcal{S}^*)}\right) = c_{d+1}(\mathrm{Sym}^{d} \mathcal{S}^*) \cdot \Delta^{m}_{b}\left(\frac{c(\mathcal{Q})}{c(\mathrm{Sym}^{d-1} \mathcal{S}^*)}\right), \]
where $\Delta^m_b$ is defined in \eqref{ddelta}.
\end{Proposition}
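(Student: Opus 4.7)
The plan is to derive Proposition \ref{classp} from the universal statement Proposition \ref{classes} by restricting to a general fiber of $\beta:\overline{\Sigma}\to\pp^N$ and then pushing forward to $\gg(1,n)$.

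The first step is to note that the splitting type $\vec{a}=(-1^b,0^\ell,1^m)$ has the form \eqref{good} required by Proposition \ref{classes}, and that by Theorem \ref{tada}, $\overline{\Sigma_{\vec{a}}}$ has the expected codimension $u(\vec{a})=mb$ in $\overline{\Sigma}$. Hence Proposition \ref{classes} computes $[\overline{\Sigma_{\vec{a}}}]$ in $A^*(\overline{\Sigma})$ as a Porteous $\Delta^{m}_{b}$ determinant in the Chern classes of $\pi_*\E(-1)$ and $\pi_*\cF(-1)$.

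Next, I identify the restrictions of these bundles to $F(X)=\beta^{-1}(X)$ for a general smooth hypersurface $X$. Using the formulas
\[\pi_*\E(-1)=\rho^*\mathcal{Q},\qquad \pi_*\cF(-1)=\rho^*\mathrm{Sym}^{d-1}\mathcal{S}^*\otimes\beta^*\O_{\pp^N}(1),\]
the factor $\beta^*\O_{\pp^N}(1)$ trivializes on the fiber, so the restrictions become $\mathcal{Q}|_{F(X)}$ and $\mathrm{Sym}^{d-1}\mathcal{S}^*|_{F(X)}$, of ranks $n-1$ and $d$ respectively. Since Theorem \ref{tada} also guarantees that $\overline{F_{\vec{a}}(X)}$ has the expected codimension $mb$ in $F(X)$, and since for splitting types of the form \eqref{good} only the $i=-1$ condition in \eqref{defSigma} is nontrivial (so that $\overline{F_{\vec{a}}(X)}=M_{(n-1)-m}(\pi_*\phi(-1)|_{F(X)})$ scheme-theoretically), Porteous' formula applies directly on $F(X)$ and yields $[\overline{F_{\vec{a}}(X)}]\in A^*(F(X))$ as the corresponding $\Delta^{m}_{b}$ determinant in $c(\mathcal{Q}|_{F(X)})$ and $c(\mathrm{Sym}^{d-1}\mathcal{S}^*|_{F(X)})$.

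Finally, I apply the projection formula to the inclusion $\iota:F(X)\hookrightarrow\gg(1,n)$, which transports the Porteous expression from $A^*(F(X))$ to $A^*(\gg(1,n))$ as $[F(X)]$ times the same expression, and substitute the standard identity $[F(X)]=c_{d+1}(\mathrm{Sym}^{d}\mathcal{S}^*)$ coming from $F(X)$ being the zero locus of the tautological section of $\mathrm{Sym}^{d}\mathcal{S}^*$ on $\gg(1,n)$ cut out by $X$. The main step requiring care is the identification of the Porteous parameters $(e,f,k)=(n-1,d,(n-1)-m)$ (so that $e-k=m$ and $f-k=b$) together with the verification that $\overline{F_{\vec{a}}(X)}$ has the expected codimension in $F(X)$; both reduce to what has already been established in Section \ref{clsec} and Theorem \ref{tada}. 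After that, the proposition is a bookkeeping exercise in Chern class manipulations on $\gg(1,n)$.
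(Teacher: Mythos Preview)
Your proposal is correct and follows essentially the same approach as the paper: restrict the map $\pi_*\phi(-1)$ to $F(X)$, identify the source and target bundles as $\mathcal{Q}|_{F(X)}$ and $\mathrm{Sym}^{d-1}\mathcal{S}^*|_{F(X)}$, apply Porteous on $F(X)$ (using Theorem~\ref{tada} for the expected codimension), and then push forward via the projection formula. One small citation slip: the expected codimension of $\overline{\Sigma_{\vec{a}}}$ in $\overline{\Sigma}$ comes from Theorem~\ref{uni-ecd}, not Theorem~\ref{tada}; but since your argument ultimately applies Porteous on $F(X)$ rather than on $\overline{\Sigma}$, the reference to Proposition~\ref{classes} is not actually needed and the correct input is indeed Theorem~\ref{tada}.
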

\begin{proof}
Restricting the vector bundles $\pi_*\E(i)$ and $\pi_*\cF(i)$ from Section \ref{clsec} to the fibers $F(X)$, we may realize $\overline{F_{\vec{a}}(X)}$ as 
\[\overline{F_{\vec{a}}(X)} = M_{r_{-1}(\vec{a})}(\pi_*\phi(-1)|_{F(X)}).\]
We have
\[\pi_*\E(i)|_{F(X)} = \mathcal{Q}|_{F(X)} \otimes \mathrm{Sym}^{1+i} \mathcal{S}^*|_{F(X)} \qquad \text{and} \qquad \pi_* \cF(i) = \mathrm{Sym}^{d+i} \mathcal{S}^*|_{F(X)},\]
so if $\iota: F(X) \rightarrow \gg(1, n)$ denotes the inclusion of $F(X)$ in the Grassmannian, the class of $F_{\vec{a}}(X)$ in the Chow ring of $F(X)$ is 
\[\Delta^m_b\left(\frac{\iota^* c(\mathcal{Q})}{\iota^*c(\mathrm{Sym}^{d-1}\mathcal{S}^*)}\right) = \iota^*\Delta^m_b\left(\frac{ c(\mathcal{Q})}{c(\mathrm{Sym}^{d-1}\mathcal{S}^*)}\right).\]
Finally, the push-pull formula tells us that the pushforward of this class to the Chow ring of the Grassmannian is given by intersecting with $[F(X)]$.
\end{proof}

\begin{Example}[$n=d=5$]
For general quintic hypersurfaces $X \subset \pp^5$, we have $\dim F(X) = 2$. The following table lists the possible splitting types for the normal bundle and the expected codimension of lines with that splitting type

\vspace{.03in}
\begin{center}
\begin{tabular}{c|c|c|c|c}
$\vec{a}$ &$ (-3, 1, 1)$ & $(-2, 0, 1)$ & $(-1, -1, 1)$ & $(-1, 0, 0)$ \\[0.5ex]
\hline
\rule{0pt}{2.5ex} $u(\vec{a})$ & 4 & 3 & 2 & 0 
\end{tabular}
\end{center}

\vspace{.05in}
\noindent
Thus, Theorem \ref{tada} says that a general $X$ contains no lines with splitting type $(-3, 1, 1)$ or $(-2, 0, 1)$ and finitely many lines with splitting type $(-1, -1, 1)$. So we can ask: how many unbalanced lines are there? Since $\Sigma_{(-1, -1, 1)}$ is smooth, Sard's theorem tells us that $F_{(-1, -1, 1)}(X)$ is smooth, and hence reduced, for general $X$. Thus, the answer is the degree of the class given in Proposition \ref{classp}:
\begin{align*}
[F_{(-1, -1, 1)}(X)] = [\overline{F_{(-1, -1, 1)}(X)}] &= c_6(\mathrm{Sym}^5 \mathcal{S}^*) \cdot \Delta^1_2\left(\frac{c(\mathcal{Q})}{c(\mathrm{Sym}^4\mathcal{S}^*)}\right).
\end{align*}
Here, $\Delta^1_2$ means we need the degree $2$ piece of the quotient in the brackets. Writing
\begin{align*}
\frac{c(\mathcal{Q})}{c(\mathrm{Sym}^4\mathcal{S}^*)} = (1 + \sigma_1 + \sigma_2 + \ldots)(1 - (c_1 + c_2 + \ldots) + (c_1 + c_2 + \ldots)^2 - \ldots)
\end{align*}
where $c_i = c_i(\mathrm{Sym}^4\mathcal{S}^*)$, we see that the degree $2$ part is
\[c_1^2 + c_2 + \sigma_2 - c_1 \sigma_1.\]
Using the splitting principle, we calculate
\[c_1 = 10 \sigma_1 \qquad \text{and} \qquad c_2 = 35\sigma_1^2 + 20 \sigma_{1,1} = 35\sigma_2 + 55\sigma_{1,1},\]
so
\begin{align*}
\Delta^1_2 \left(\frac{c(\mathcal{Q})}{c(\mathrm{Sym}^4\mathcal{S}^*)}\right) &= 100\sigma_1^2 + 35\sigma_2 + 55\sigma_{1,1} + \sigma_2 - 10\sigma_1^2 = 126 \sigma_2 + 145\sigma_{1,1}.
\end{align*}
Another application of the splitting principle shows that
\[c_6(\mathrm{Sym}^5 \mathcal{S}^*) = 600\sigma_{1,1}\sigma_1^4 + 1450 \sigma_{1,1}^2\sigma_1^2 - 225\sigma_{1,1}^3 = 3250\sigma_{4,2} + 2425\sigma_{3,3}.\]
Putting this all together, we find
\[[F_{(-1, -1, 1)}(X)] =  (126\sigma_2 + 145 \sigma_{1,1})(3250\sigma_{4,2} + 2425\sigma_{3,3}) = 761125\sigma_{4,4}.\]
That is, there are $761125$ unbalanced lines on a general quintic fourfold.

\end{Example}

\section{Dimension bounds for certain $F_{\vec{a}}(X)$} \label{bounds}

In Section 5, we determined the dimension of $F_{\vec{a}}(X)$ for general $X$. In this section, we give upper bounds on the dimension of lines with two particular splitting types which are valid for all smooth hypersurfaces.

\subsection{Completely unbalanced lines} \label{cul}
There is a unique splitting type $\vec{a}$ which is the ``most unbalanced" allowed by the conditions $a_i \leq 1$ and $\sum_{i=1}^{n-2} = n-d-1$, namely $\vec{a} =(-d+2, 1, \ldots, 1)$.
For a given degree $d$, let $F'(X) = F_{(-d+2, 1,\ldots,1)}(X)$ denote the locus of lines with this splitting type.
The expected codimension of lines with this splitting type is $(n-3)(d-2)$, so Theorem \ref{uni-ecd} tells us 
\[\dim \Sigma_{(-d+2, 1, \ldots, 1)} = 2n - d - 3 + N - (n-3)(d-2).\]
It follows that
\begin{equation} \label{F'bound}
\dim F'(X) \geq 2n - d - 3 - (n-3)(d-2)  = n-3 - (d-3)(n-2),
\end{equation}
and if $d \geq 4$, the locus $F'(X)$ is empty for general $X$. However, for certain $X$, a priori, $F'(X)$ could be as large as $F(X)$, which is in turn only bounded above by $\dim T_LF(X) = 2n - 6$. The following provides an upper bound of half this amount.

\begin{Theorem} \label{dimF'}
If $X \subset \pp^n$ is a smooth hypersurface of degree $d \geq 3$, then $\dim F'(X) \leq n-3$.
\end{Theorem}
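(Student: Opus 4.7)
The plan is to exhibit a canonical codimension-two linear subspace attached to each $L \in F'(X)$ and translate the desired bound into a dimension count on an incidence correspondence.

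For $L \in F'(X)$, choose coordinates so that $L = V(x_2, \ldots, x_n)$; the condition $N_{L/X} \cong \O(-d+2) \oplus \O(1)^{n-3}$ is equivalent to the restricted partial derivatives $(f_2|_L, \ldots, f_n|_L)$ spanning exactly a two-dimensional subspace of $H^0(\O_L(d-1))$ (the minimum possible given smoothness of $X$). Geometrically, the Gauss map $\gamma|_L : L \to (\pp^n)^\vee$, $p \mapsto T_pX$, factors through a pencil $\pp^1 \subset (\pp^n)^\vee$ of hyperplanes through $L$, whose base locus is a unique codimension-two subspace $\Lambda_L \subset \pp^n$ containing $L$. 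Conversely, if $(L, \Lambda)$ is any pair with $L \subset X$, $\Lambda$ a codimension-two subspace containing $L$, and $T_pX \supset \Lambda$ for every $p \in L$, then smoothness of $X$ forces the Gauss image to be exactly one-dimensional, so $L \in F'(X)$ and $\Lambda = \Lambda_L$. Hence the assignment $L \mapsto (L, \Lambda_L)$ identifies $F'(X)$ bijectively with the incidence variety
\[
J = \{(L, \Lambda) \in \gg(1, n) \times \gg(n-2, n) : L \subset \Lambda,\ T_pX \supset \Lambda \text{ for all } p \in L\}.
\]

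The condition $T_pX \supset \Lambda$ for every $p \in L$ is equivalent to $L$ being contained in the singular locus of the hypersurface $X \cap \Lambda \subset \Lambda \cong \pp^{n-2}$, i.e.\ $f|_\Lambda \in I_L^2$, where $I_L$ denotes the ideal of $L$ inside $\Lambda$. I would then project $J$ onto $\gg(n-2, n)$ via $(L, \Lambda) \mapsto \Lambda$ and bound $\dim J$ by studying the image and generic fibers. A useful preliminary input is the dimension of the universal locus $\mathcal{X} = \{(\Lambda, p) : T_pX \supset \Lambda\}$: since the fiber over each $p \in X$ consists of hyperplanes of the $(n-1)$-plane $T_pX$, one computes $\dim \mathcal{X} = (n-1) + (n-1) = 2n - 2 = \dim \gg(n-2, n)$, so for generic $\Lambda$ the set $X_\Lambda$ is zero-dimensional and contains no lines; the image of $J \to \gg(n-2, n)$ therefore lies in a proper closed subvariety.

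The main obstacle is obtaining the bound $\dim J \leq n-3$ uniformly over all smooth $X$, since for special $X$ the fibers can jump in dimension. My plan is to combine the explicit tangent-space description of $T_L F'(X)$ from Lemma~\ref{tlem} with an analysis of how $f|_\Lambda$ can lie in $I_L^2$. Specifically, using coordinates where $f_4|_L = \cdots = f_n|_L = 0$, Lemma~\ref{tlem} cuts out $T_L F'(X) \subset H^0(N_{L/X}) \cong \mathbb{C}^{2(n-3)}$ by the $(n-3)(d-2)$ scalar conditions $\sum_{j\geq 4} H_{kj} v_j \in \mathrm{span}\{f_2, f_3\}$ for $k = 4, \ldots, n$. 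The key claim is that these conditions have rank at least $n-3$ at a generic point of each irreducible component of $F'(X)$: a drop of rank below $n-3$ forces the symmetric matrix $H = (H_{kj})_{k,j\geq 4}$ to have a nonzero global null vector $w$ along $L$, which is the same as saying that the $2$-plane $\Pi = L + \mathrm{span}(w)$ meets $X$ along $L$ with multiplicity at least three. I would rule out a component $C \subset F'(X)$ of dimension $> n-3$ on which such pathological planes vary by a secondary incidence correspondence with these $2$-planes $\Pi$, using smoothness of $X$ to obstruct the existence of a positive-dimensional family; this last step, propagating the local tangent-space information into a global dimension bound on $F'(X)$, is the most delicate part of the argument.
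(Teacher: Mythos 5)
Your opening observations are correct and match the paper's starting point: for $L \in F'(X)$ the restricted partials $f_2|_L,\ldots,f_n|_L$ span exactly a $2$-dimensional space, so the Gauss map $\G$ carries $L$ onto a line in $\pp^{n*}$, i.e.\ the tangent hyperplanes along $L$ form a pencil with base locus a codimension-two plane $\Lambda_L \supset L$. But from there the argument has a genuine gap. The incidence variety $J$ and the count $\dim \mathcal{X} = 2n-2$ only show that the image of $J$ in $\gg(n-2,n)$ is a proper subvariety; this gives a bound of roughly $2n-3$ plus fiber dimensions, nowhere near $n-3$, and you do not use it further. The entire content of the theorem is then loaded onto the claim that the conditions of Lemma \ref{tlem} have rank at least $n-3$ at a generic point of every component of $F'(X)$, and this is precisely the step you leave undone (``the most delicate part''). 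Moreover the intermediate assertion --- that a rank drop below $n-3$ forces a constant null vector of the matrix $(H_{kj})_{k,j\ge 4}$ --- is not justified: the conditions take values in $H^0(\O(d-1))/\mathrm{span}\{f_2,f_3\}$, and a large kernel does not obviously produce a single constant vector $w$ annihilated by all the $H_{kj}$. Even granting it, $2$-planes meeting $X$ along $L$ with multiplicity three genuinely occur on smooth hypersurfaces (for cubic threefolds these are exactly the nodes of $F'$ analyzed in Section 5), so ruling out a positive-dimensional family of them inside one component is itself a nontrivial global statement, not a routine incidence count. A tangent-space approach is also intrinsically delicate here because $T_LF'(X)$ can jump to all of $T_LF(X)$ at special lines, as the Fermat quartic computation shows.

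The paper closes this gap by a different, global mechanism that your setup already puts within reach: since $\G|_L$ is a degree $d-1 \ge 2$ map onto a line, every $L \in F'(X)$ carries a one-dimensional family of pairs $(p,q)$, $p \ne q$, with $\G(p)=\G(q)$. Forming $\Theta = \{(p,q,L): p,q \in L,\ \G(p)=\G(q),\ p \ne q\}$, one uses two classical facts about the Gauss map of a smooth hypersurface --- it is finite, and it is generically one-to-one --- to conclude that $\pi_1(\Theta)$ lies in a proper closed subset of $X$ (so has dimension $\le n-2$) and that the fibers of $\pi_1$ are finite, whence $\dim\Theta \le n-2$; since the fibers of $\Theta \to F'(X)$ are one-dimensional, $\dim F'(X) \le n-3$. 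If you want to salvage your approach, you would need to either prove the rank claim uniformly in $X$ or replace it with an argument of this global flavor.
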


\begin{remark}
This is proved for cubics in Corollary 7.6 of \cite{CG}.
\end{remark}

Note that for cubics, the lower bound \eqref{F'bound} then determines the dimension of $F'(X)$ exactly.
Given that for $d \geq 4$, the lower bound in \eqref{F'bound} is trivial, one might expect that Theorem \ref{dimF'} is far from the truth of what is actually achieved. However, the bound is sharp, as demonstrated by the following proposition.

\begin{Proposition} \label{fermat}
Let $X = V(x_0^d + \ldots + x_n^d)$ for $d\geq 4$. Then $\dim F'(X) = n-3$.
\end{Proposition}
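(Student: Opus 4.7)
The plan is to combine the upper bound $\dim F'(X) \leq n-3$ from Theorem \ref{dimF'} with an explicit construction of an $(n-3)$-dimensional family of lines in $F'(X)$. Motivated by the Fermat quartic threefold example in Section \ref{fsec}, I would consider the family
\[L_{(\vec{a}, \zeta)} = \{[s:t:a_2 s:a_3 s:\cdots:a_{n-1} s:\zeta t] : [s:t] \in \pp^1\},\]
parametrized by $(a_2, \ldots, a_{n-1}) \in \aa^{n-2}$ subject to $\sum_{i=2}^{n-1} a_i^d = -1$ together with a choice of $d$-th root $\zeta$ of $-1$. Direct substitution into the Fermat polynomial shows $L_{(\vec{a},\zeta)} \subset X$, and comparing the images of $[1:0]$ and $[0:1]$ on distinct parameter tuples shows different parameters yield different lines. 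The parameter space is an affine hypersurface of dimension $n-3$ times a set of $d$ points, so the family has dimension $n-3$.

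The main content is then identifying the normal bundle of a general member. The key computation is to restrict the partial derivatives of $f = \sum x_i^d$ to $L_{(\vec{a}, \zeta)}$, obtaining
\[\left.\frac{\partial f}{\partial x_i}\right|_L = d\,a_i^{d-1}\,s^{d-1} \text{ for } 2 \leq i \leq n-1, \qquad \left.\frac{\partial f}{\partial x_n}\right|_L = d\,\zeta^{d-1}\,t^{d-1},\]
all lying in the $2$-dimensional subspace $\langle s^{d-1}, t^{d-1}\rangle \subset H^0(\O_L(d-1))$. Because $\zeta \neq 0$ and some $a_i$ is non-zero, the $2 \times (n-1)$ coefficient matrix has rank exactly $2$. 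Its kernel gives $n-3$ independent constant syzygies among the $f_i$, which, multiplied by $s$ or by $t$, produce $2(n-3)$ independent global sections of $N_{L/X} = \ker(\O_L(1)^{n-1} \to \O_L(d))$.

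The one genuine (though mild) obstacle is translating the inequality $h^0(N_{L/X}) \geq 2(n-3)$ into the exact splitting type. This reduces to the elementary claim that, for $d \geq 4$, among splitting types $\vec{a}=(a_1 \leq \cdots \leq a_{n-2})$ of rank $n-2$, total degree $n-d-1$, and all $a_i \leq 1$, the value $h^0(\O(\vec{a})) = 2(n-3)$ is attained only by $(-d+2, 1, \ldots, 1)$. Letting $S = \{i : a_i \geq 0\}$, one has $h^0 \leq 2|S|$; the case $|S| = n-3$ forces all positive entries to be $1$, yielding the desired splitting type, while $|S| = n-2$ would force all $a_i \geq 0$ summing to $n-d-1$, giving $h^0 = 2n - d - 3 < 2(n-3)$ once $d \geq 4$. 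Smaller $|S|$ gives $h^0 \leq 2(n-4)$. Combining the resulting lower bound $\dim F'(X) \geq n-3$ with Theorem \ref{dimF'} yields the proposition.
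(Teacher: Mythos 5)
Your proof is correct, and it uses the same underlying family of lines as the paper: your lines $L_{(\vec a,\zeta)}$ are exactly the lines through the vertex of the cone $X \cap \{x_n = \zeta x_1\}$, which is the paper's construction up to a permutation of coordinates. The two arguments diverge only in how they establish the key inequality $h^0(N_{L/X}) \geq 2n-6$. The paper argues geometrically: since the hyperplane section is a cone, every line in the tangent $\pp^{n-2}$ along $L$ deforms $L$ to first order inside $X$, so $\dim T_LF(X) \geq \dim \gg(1,n-2) = 2n-6$. You instead compute the restricted partials $f_i = \left.\partial f/\partial x_i\right|_L$ explicitly, observe they span only $\langle s^{d-1}, t^{d-1}\rangle$, and manufacture $2(n-3)$ independent sections of $N_{L/X}$ from the $n-3$ constant syzygies; this is more computational but entirely self-contained, and it sidesteps any appeal to the cone structure. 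Both routes then invoke the same numerical fact that for $d \geq 4$ the bound $h^0(\O(\vec a)) \geq 2n-6$ forces $\vec a = (-d+2,1,\ldots,1)$ --- a fact the paper asserts in one sentence and you actually prove via the $h^0 \leq 2|S|$ case analysis, which is a worthwhile addition. Your $h^0$-based deduction of the splitting type is also the right move: it avoids having to check that the sheaf inclusion $\O(1)^{n-3} \hookrightarrow N_{L/X}$ coming from the syzygies is saturated. Both proofs finish identically by combining the resulting lower bound $\dim F'(X) \geq n-3$ with Theorem \ref{dimF'}.
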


\begin{proof}[Proof of Theorem \ref{dimF'}]
Suppose that we have some $L \in F'(X)$. Choose coordinates so that $L = V(x_2, \ldots, x_n)$ and write the defining equation of $X$ as
\[f = \sum_{i=2}^n x_i f_i(x_0, x_1) + (x_2,\ldots, x_n)^2.\]
In the short exact sequence of normal bundles
\[0 \rightarrow \O(-d+2) \oplus \O(1)^{n-3} \rightarrow \O(1)^{n-1} \xrightarrow{(f_2, \ldots, f_n)} \O(d) \rightarrow 0,\]
each inclusion of $\O(1)$ defines an independent linear relation of $f_2, \ldots, f_n$. Hence, the span of $f_2, \ldots, f_n$ is $2$-dimensional. The other partial derivatives $\frac{\partial f}{\partial x_0}$ and $\frac{\partial f}{\partial x_1}$ vanish along $L$, so the Gauss map
\[\G: X \rightarrow \pp^{n*}, \quad p \mapsto \left[\frac{\partial f}{\partial x_0}(p), \ldots, \frac{\partial f}{\partial x_n}(p)\right]\]
sends the line $L$ with degree $d-1 \geq 2$ onto a line. 

Now consider the incidence correspondence
\[\Theta = \{(p, q, L) \in X \times X \times F'(X): p, q \in L, \G(p) = \G(q), p \neq q\}.\]
Let $\pi_1 : \Theta \rightarrow X$ be projection onto the first factor, and $\pi_3: \Theta \rightarrow F'(X)$ projection onto the last factor. Every point in $\pi_1(\Theta) \subset X$ is a point where the fiber of the Gauss map consists of $2$ or more points.
Since the Gauss map is generically one-to-one,
$\overline{\pi_1(\Theta)}$ cannot be all of $X$. Hence, 
\[\dim \pi_1(\Theta) \leq \dim(X) - 1 = n-2.\]
Meanwhile, since the Gauss map is finite,
the fibers of $\pi_1$, which consist of points with the same image under $\G$, must be finite. It follows that $\dim \Theta \leq n-2$ as well. Finally, our work in the previous paragraph shows that the fibers of $\pi_3$ are one-dimensional, so we can conclude that $\dim F'(X) \leq n-3$.
\end{proof}

\begin{proof}[Proof of Proposition \ref{fermat}]
Let $X = V(x_0^d + \ldots + x_n^d)$ and let $H$ be the hyperplane defined by $x_n = \zeta x_{n-1}$ where $\zeta^d = -1$. Then $H \cap X$ is a cone over the smooth hypersurface $V(x_0^d + \ldots + x_{n-2}^d) \subset \pp^{n-2}$. Let $L \subset X \cap H$ be any line passing through the vertex of this cone. Then any line in the tangent plane along $L$ to $X \cap H$ is on $X \cap H$ to first order. Hence, 
\[\dim T_LF(X) \geq \dim \gg(1, n-2) = 2n - 6.\]
For $d \geq 4$, the only way that $h^0(N_{L/X})$ can be greater than or equal to $2n-6$ is when $N_{L/X}$ has $n-3$ copies (the maximal number) of $\O(1)$'s. Thus, all lines on $X \cap H$, which form a family of dimension at least $n-3$, are in $F'(X)$. Hence, $\dim F'(X) \geq n-3$, and Theorem \ref{dimF'} shows that we have equality.
\end{proof}

\subsection{Almost completely unbalanced lines}
A similar trick involving the Gauss map works to find an upper bound on the dimension of the locus of lines whose normal bundle has the form
\[N_{L/X} \cong \O(-a) \oplus \O(-b) \oplus \O(1)^{n-4}, \qquad a+b = d-3.\]
Let $G_{a,b}(X) = F_{(-a,-b,1,\ldots,1)}(X)$ denote the locus of such lines.
The expected codimension of $G_{a, b}(X)$ is
\[(n-4)(d-3) + \mathrm{max}\{0, a-b-1\},\]
giving rise to the lower bound on the dimension
\[\dim G_{a,b}(X) \geq (n-3) - (d-4)(n-3) - \mathrm{max}\{0, a - b - 1\}.\]
Note that for cubics, $a$ and $b$ are both forced to be zero and these are the balanced lines. In this case, this lower bound coincides with the dimension of the tangent space $T_LF(X)$, so $\dim G_{0,0} = 2n - 6$. For $d \geq 4$, we have the following upper bound on the dimension.

\begin{Theorem} \label{gbound}
If $d \geq 4$, then $\dim G_{a, b}(X) \leq n-1$ for all smooth hypersurfaces $X \subset \pp^n$ of degree $d$.
\end{Theorem}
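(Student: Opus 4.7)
The plan is to adapt the Gauss map argument from the proof of Theorem \ref{dimF'}. For any $L \in G_{a,b}(X)$, choose coordinates so that $L = V(x_2, \ldots, x_n)$ and write $f_i = \frac{\partial f}{\partial x_i}\big|_L$. The normal bundle sequence
\[0 \to \O(-a) \oplus \O(-b) \oplus \O(1)^{n-4} \to \O_L(1)^{n-1} \xrightarrow{(f_2, \ldots, f_n)} \O_L(d) \to 0\]
shows that the $n-4$ copies of $\O(1)$ in $N_{L/X}$ contribute $n-4$ independent linear syzygies, so $\mathrm{span}\{f_2, \ldots, f_n\}$ has dimension exactly $3$. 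Consequently the Gauss map restricted to $L$ factors through a $2$-plane in $\pp^{n*}$, yielding a map $\phi_L: L \cong \pp^1 \to \pp^2$ of degree $d-1 \geq 3$. Since smooth rational plane curves have degree at most $2$, $\phi_L$ cannot be an embedding.

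Mirroring the proof of Theorem \ref{dimF'}, I would introduce the incidence correspondence
\[\Theta = \{(p, q, L) \in X \times X \times G_{a,b}(X) : p \neq q,\ p, q \in L,\ \G(p) = \G(q)\},\]
with projections $\pi_1: \Theta \to X$ and $\pi_3: \Theta \to G_{a,b}(X)$. The image $\pi_1(\Theta)$ lies in the locus where $\G$ fails to be injective, which (since the Gauss map of a smooth hypersurface is generically one-to-one) is a proper closed subset of $X$ of dimension at most $n-2$. The fibers of $\pi_1$ are finite because $\G$ is finite and $L = \overline{pq}$ is determined by $(p, q)$, so $\dim \Theta \leq n-2$. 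On any component of $G_{a,b}(X)$ over which the generic $\phi_L$ identifies two distinct points of $L$, the projection $\pi_3$ is surjective onto that component with fibers of dimension at most $1$, already giving the desired bound $\dim \leq n-1$.

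The main obstacle is handling components where the generic $\phi_L$ has only unibranch (cuspidal) singularities in its image, so that it does not identify distinct pairs of points on $L$; for instance, $\phi_L$ could parametrize a cuspidal cubic when $d = 4$. For such lines, I would introduce a secondary correspondence
\[\Theta_c = \{(p, L) \in X \times G_{a,b}(X) : p \in L \text{ and } d\G|_L \text{ vanishes at } p\}.\]
The image of the projection $\Theta_c \to X$ sits inside the ramification locus of $\G$, which has dimension at most $n-2$, and the fibers over a generic ramification point are finite, since $\ker d\G_p$ is then $1$-dimensional and pins down $T_pL$, determining $L$. This gives $\dim \Theta_c \leq n-2$. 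Because $\phi_L$ is never an embedding, every component of $G_{a,b}(X)$ is dominated by either $\Theta$ or $\Theta_c$, and combining the two estimates yields $\dim G_{a,b}(X) \leq n-1$ in all cases.
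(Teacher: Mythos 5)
Your overall strategy is the same as the paper's: the three-dimensionality of $\mathrm{span}\{f_2,\ldots,f_n\}$ makes $\G|_L$ a degree $d-1\geq 3$ map onto a plane curve, hence not an embedding, and you split $G_{a,b}(X)$ according to whether $\G|_L$ identifies two distinct points or has vanishing differential somewhere. Your treatment of the first correspondence $\Theta$ matches the paper's and is fine: its image in $X$ lies in the non-injectivity locus of $\G$, of dimension at most $n-2$, and finiteness of $\G$ forces the fibers of $\pi_1$ to be finite, so $\dim\Theta\leq n-2$.

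The gap is in the cuspidal correspondence $\Theta_c$. From ``$\pi_1(\Theta_c)$ lies in the ramification locus, of dimension $\leq n-2$'' together with ``the fibers of $\pi_1$ over a \emph{generic} ramification point are finite,'' you cannot conclude $\dim\Theta_c\leq n-2$: a component of $\Theta_c$ may sit entirely over a proper subvariety of the ramification locus along which $\ker(d\G)_p$ jumps to dimension $j\geq 2$, and there the fiber of $\pi_1$ is a family of lines of dimension up to $j-1$, which your argument does not control. The paper closes exactly this loophole by stratifying: if $\dim\pi_1^{-1}(p)=k$ then $\dim\ker(d\G)_p\geq k$, and the locus of $p\in X$ where $d\G$ has corank at least $k$ has codimension at least $k$ in $X$ (a consequence of finiteness of $\G$, since otherwise $\G$ restricted to that locus would have everywhere-degenerate differential and hence positive-dimensional fibers), so each stratum of $\Theta_c$ contributes at most $(n-1-k)+k=n-1$. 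Note that this yields only $\dim\Theta_c\leq n-1$, not the $n-2$ you claim, but $n-1$ is all the theorem needs. With this stratification inserted in place of your ``generic ramification point'' step, your proof coincides with the paper's.
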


\begin{proof}
Suppose that we have some $L \in G_{a,b}(X)$. Choose coordinates so that $L = V(x_2, \ldots, x_n)$ and write the defining equation of $X$ as
\[f = \sum_{i=2}^n x_i f_i(x_0, x_1) + (x_2,\ldots, x_n)^2.\]
In the short exact sequence of normal bundles
\[0 \rightarrow \O(-a) \oplus \O(-b) \oplus \O(1)^{n-4} \rightarrow \O(1)^{n-1} \xrightarrow{(f_2, \ldots, f_n)} \O(d) \rightarrow 0,\]
each inclusion of $\O(1)$ defines an independent linear relation of $f_2, \ldots, f_n$. Hence, the span of $f_2, \ldots, f_n$ is $3$-dimensional. The other partial derivatives $\frac{\partial f}{\partial x_0}$ and $\frac{\partial f}{\partial x_1}$ vanish along $L$, so now the Gauss map
\[\G: X \rightarrow \pp^{n*}, \quad p \mapsto \left[\frac{\partial f}{\partial x_0}(p), \ldots, \frac{\partial f}{\partial x_n}(p)\right]\]
sends the line $L$ with degree $d-1 \geq 3$ onto a plane curve. Since the genus of a smooth degree $e$ plane curve is ${e-1 \choose 2} > 0$ for $e \geq 3$, such a map cannot be an embedding. Hence, there exists some point on $L$ where the differential is zero, or two points on $L$ that have the same image (or both).

First consider the incidence correspondence
\[\Theta = \{(p, L) \in X \times G_{a,b}(X): p \in L \text{ and } (d\G|_L)_p = 0\}\]
with projections $\pi_1 : \Theta \rightarrow X$ and $\pi_2: \Theta \rightarrow G_{a,b}(X)$. If $p \in X$ has $\dim \pi_1^{-1}(p) = k$, then $\dim \ker (d\G)_p \geq k$.
The dimension of the locus of points in $X$ where the differential of the Gauss map has this rank is at most $n-1-k$. We conclude that $\dim \Theta \leq n-1$, and so $\dim \pi_2(\Theta) \leq n-1$ as well.

Next consider the incidence correspondence
\[\Theta' = \{(p, q, L) \in X \times X \times G_{a,b}(X): p, q \in L, \G(p) = \G(q), p \neq q\}.\]
Because the image of $\Theta'$ onto the first factor consists of points where $\G$ fails to be one-to-one, we must have $\dim \pi_1(\Theta') \leq \dim(X) - 1 = n-2$. On the other hand, the fact that the Gauss map is finite guarantees that the fibers $\pi_1$ are finite, so $\dim \Theta' \leq n-2$, and hence $\dim \pi_3(\Theta') \leq n-2$.

The fact that $\G|_L$ is not an embedding for any $L \in G_{a,b}(X)$ tells us that $G_{a,b}(X) = \pi_2(\Theta) \cup \pi_3(\Theta')$. In particular, we have
\[\dim G_{a,b}(X) = \mathrm{max}\{\dim \pi_2(\Theta) , \dim \pi_3(\Theta')\} \leq n-1,\]
which is the desired result.
\end{proof}

\end{document}